\documentclass[oneside,english,reqno, 12pt]{amsart}

\usepackage{amssymb,amsfonts}
\usepackage{hyperref} 
\usepackage{enumerate}
\usepackage{mathrsfs}
\usepackage[a4paper, left=2.5cm, right=2.5cm, top=2.5cm, bottom=2.5cm]{geometry}
\usepackage{setspace}
\onehalfspacing

\theoremstyle{plain} 
\newtheorem{theorem}{Theorem}[section]
\newtheorem{lemma}[theorem]{Lemma}
\newtheorem{proposition}[theorem]{Proposition}
\newtheorem{corollary}[theorem]{Corollary}
\theoremstyle{definition}

\numberwithin{theorem}{section}

\newcommand{\R}{\mathbb{R}}
\newcommand{\C}{\mathbb{C}}
\newcommand{\Z}{\mathbb{Z}}

\renewcommand{\mod}{\operatorname{mod}}

\newcommand{\td}{\rightarrow}

\newcommand{\bb}[1]{\left(#1\right)}

\bibliographystyle{plain}

\title{A density theorem for prime squares}

\author{Genheng Zhao}
\address{Academy of Mathematics and Systems Science, Chinese Academy of Sciences, Beijing 100190, China}
\email{zhaogenheng@amss.ac.cn}

\date{}

\begin{document}

\begin{abstract}
 Let $s\geq 8$ be an integer  and  $P$ be a set  of primes  with relative lower density greater than $\sqrt{1-\min\{s,16\}/32}$. We prove that every sufficiently large integer  $n\equiv s(\mod 24)$ can be represented by a sum of $s$ squares of primes in $P$.  
\end{abstract}

\maketitle

\section{Introduction and main results}
In 1938, Hua \cite{Hua} showed that for $s\geq 5$, every sufficiently large integer $n\equiv s (\mod  24)$ can be represented by  a sum of $s$ prime squares.  We prove here a density version of this, but for a smaller range $s\geq 8$. 

Let $\mathbb{\mathbb P}$ denote the set of primes and $P\subset \mathbb{P}$, we define the lower density of $P$ by
\begin{equation}
\label{eq: density-def1}
	\delta_P=\liminf_{N\td\infty} \frac{|\{ n\leq N: n\in P\}|}{|\{n \leq N: n\in \mathbb{P}\}|}.
\end{equation}  

\begin{theorem}
\label{thm:main}

Let $s\geq 8$ be an integer and $P\subset \mathbb{P}$ with $\delta_P> \sqrt{1-\min\{s,16\}/32}$. Then every sufficiently large integer $n\equiv s(\mod 24)$ can be represented by a sum of $s$ squares of primes in $P$.
\end{theorem}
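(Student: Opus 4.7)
The plan is to follow the Hardy--Littlewood circle method, with a Fourier decomposition designed to isolate the density of $P$. Set $X=\sqrt n$ and define
\[
f_P(\alpha)=\sum_{\substack{p\in P\\ p\le X}}(\log p)\,e(\alpha p^2),\qquad f(\alpha)=\sum_{\substack{p\text{ prime}\\ p\le X}}(\log p)\,e(\alpha p^2),
\]
so that the weighted representation count
\[
R_P(n)=\int_0^1 f_P(\alpha)^s\,e(-\alpha n)\,d\alpha
\]
is positive precisely when $n$ admits a representation of the desired type; it therefore suffices to prove $R_P(n)\gg n^{(s-2)/2}(\log n)^{-s}$ for all sufficiently large $n\equiv s\pmod{24}$. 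The decisive tool is the splitting $f_P(\alpha)=\delta_P f(\alpha)+g(\alpha)$ with
\[
g(\alpha)=\sum_{p\le X}\bigl(\mathbf 1_P(p)-\delta_P\mathbf 1_{\mathbb P}(p)\bigr)(\log p)\,e(\alpha p^2).
\]
Parseval's identity, combined with the elementary estimate $\sum_{p\le X}(\mathbf 1_P(p)-\delta_P\mathbf 1_{\mathbb P}(p))^2(\log p)^2\ll(1-\delta_P^2)X\log X$ obtained from the density hypothesis and Mertens' theorem, yields $\|g\|_2^2\ll(1-\delta_P^2)X\log X$. This $L^2$-smallness is the quantitative embodiment of the density assumption.

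Expanding $f_P^s=(\delta_P f+g)^s$ by the binomial theorem gives
\[
R_P(n)=\delta_P^s R_{\mathbb P}(n)+\sum_{k=1}^{s}\binom{s}{k}\delta_P^{s-k}I_k,\qquad I_k=\int_0^1 f(\alpha)^{s-k}g(\alpha)^k\,e(-\alpha n)\,d\alpha.
\]
The dominant term $\delta_P^s R_{\mathbb P}(n)$ is handled by Hua's classical theorem cited in the introduction: the hypothesis $n\equiv s\pmod{24}$ guarantees positivity of every local density and hence of the singular series, so $R_{\mathbb P}(n)\gg n^{(s-2)/2}(\log n)^{-s}$, whence $\delta_P^s R_{\mathbb P}(n)\gg\delta_P^s n^{(s-2)/2}(\log n)^{-s}$.

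The heart of the proof is to bound each $|I_k|$ by $C_{s,k}(1-\delta_P^2)^{\alpha_k}n^{(s-2)/2}(\log n)^{-s}$ for some $\alpha_k>0$, and then to balance the total error against the main term. I would apply H\"older's inequality to decouple the factors $f^{s-k}$ and $g^k$, controlling the $f$-factor via Hua-type moment estimates for prime squares---notably the 8th moment bound arising from sums of four prime squares---and the $g$-factor by interpolation between the $L^2$ bound above and the comparison $\int_0^1|g(\alpha)|^{2m}\,d\alpha\ll\int_0^1|f(\alpha)|^{2m}\,d\alpha$, itself a direct counting argument. The main obstacle is to calibrate these bounds sharply enough to produce the exact threshold $\delta_P^2>1-\min\{s,16\}/32$: the denominator $32$ originates in the 8th-moment Hua inequality, while the cut-off at $s=16$ reflects the saturation point beyond which moments higher than the $16$th yield no additional improvement, fixing the threshold at $1/\sqrt 2$. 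A subsidiary difficulty is that the Vinogradov-type minor-arc bound available for $f$ does not extend trivially to $f_P$ or $g$, so the pointwise minor-arc control on $f$ must be paired with the $L^2$ control on $g$ to close the argument.
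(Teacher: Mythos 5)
Your decomposition $f_P=\delta_P f+g$ with the $L^2$ bound $\|g\|_{L^2}^2\ll(1-\delta_P^2)X\log X$ cannot close the argument, because the savings it provides is only a \emph{constant factor} in the density, while the error terms $I_k$ are larger than the main term by a \emph{power of $X$}. Concretely, for $I_1=\int_0^1 f^{s-1}g\,e(-\alpha n)\,d\alpha$, Cauchy--Schwarz and Hua's lemma give
\[
|I_1|\le\|g\|_{L^2}\Bigl(\int_0^1|f|^{2(s-1)}\Bigr)^{1/2}\ll (1-\delta_P^2)^{1/2}(X\log X)^{1/2}\cdot X^{s-2}(\log X)^{O(1)},
\]
which exceeds the main term $\delta_P^s R_{\mathbb P}(n)\asymp X^{s-2}(\log X)^{-s}$ by roughly $X^{1/2}$, irrespective of how close $\delta_P$ is to $1$. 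The point is that $\|g\|_{L^2}$ and $\|f\|_{L^2}$ are both $\asymp\sqrt{X\log X}$; the density hypothesis improves the implicit constant but cannot make $g$ genuinely ``small.'' The same $X^{1/2}$ loss survives if you split into major and minor arcs, because the pointwise bound on $f$ over the minor arcs buys you a power of $\log X$, not of $X$, and you still have to pay $\|g\|_{L^2}$ at the end. This is exactly the obstruction that the Green--Salmensuu transference machinery is designed to overcome: instead of exhibiting $g$ as small in $L^2$, one uses the fact that $f_b$ is \emph{bounded by a pseudorandom majorant} together with a discrete $L^q$ restriction estimate (Propositions~3.2 and~3.3 in the paper), and the density enters only through a mean-value condition, not an $L^2$ norm.

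The numerology in your last paragraph is also not where the threshold comes from. In the paper, the constant $32$ in $\sqrt{1-\min\{s,16\}/32}$ arises from a pigeonhole argument on the $W$-tricked densities $\delta(N;b)$ over $b\in Z(W)$, combined with Lemma~3.3: any subset of $Z(W)$ of proportion $>1/2$ has $8$-fold sumset equal to all admissible residues modulo $W$. One then represents the target residue using eight shifts $b_1,\dots,b_8$ of near-average density and pads with $s-8$ copies of the maximizing shift; the case analysis forces $16\lambda^2\ge 16-s/2$ for $s\le 16$, giving $\lambda^2=1-s/32$, and saturates at $\lambda^2=1/2$ for $s\ge16$. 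The $8$th-moment estimate does appear (it drives the restriction estimate with exponent $q>4$, whence the requirement $s\ge 8$), but it is not the source of the $32$. Your proposal has no analogue of the $W$-trick or the $Z(W)$ sumset lemma, and without them the threshold would not come out as stated even if the $L^2$ problem above were fixed.
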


The main tool we utilize here is the transference principle developed by Green  \cite{Green}, which allows us to consider our problem on pseudorandom sets. Roughly speaking, a set of integers is said to be pseudorandom  if its members are distributed randomly among congruences of small modules. According to this criterion, the set of odd primes is of course not pseudorandom. An obviously way to gain pseudorandomness is by the $W$-trick, that is, to consider sets of the from $\{n:Wn+b\in \mathbb{P}\}$ with $\prod_{p<w}p$ and $(W,b)=1$. Using this method, Li-Pan \cite{Pan} and  Shao \cite{Shao} succeeded in establishing density versions of  Vinogradov's theorem of three primes, with sharp density bounds. 

As for the problem of prime squares, pseudorandom sets should be of the form $\{n:Wn+b\in \mathbb{P}^2\}$.  Since $p^2\equiv 1(\mod 24)$ holds for all odd primes, we must have $24\mid W$. Thus the simplest module for prime squares to gain pseudorandomness  is likely to be $W= 8\prod_{2<p<w} p$.   This module has appeared in the work of Browning-Prendiville \cite{TD} for transferring general squares but not prime squares.
We will show that  it can  be used to transfer prime squares as well. Based on this, we follow a refined transference argument of Salmensuu \cite{Sal} to obtain a density theorem for  prime squares. 

The rest of this paper is organized  as follows. In section 2 we fix some notations. In Section 3 we use transference principle to prove      
Theorem \ref{thm:main}, assuming that we can transfer prime squares by $W=8\prod_{2<p<w}p$.  This will be verified  in the subsequent  two sections. 
\section{Notation}
 Let $\Z$ denote the set of integers, $\R$  the set of real numbers and $\C$ the set of complex numbers. Let $q$ be a positive integer, we will write $\Z_q=\Z/q\Z$. For $\alpha \in \R$, we use $\|\alpha\|$ to denote the distance between $\alpha$ and its nearest integer and $\lfloor \alpha \rfloor$ the largest integer less than $\alpha$.  Let $q\geq 1$ be an integer. We define $[q]=\{1,2,3,...,q\}$, $[q]^*=\{a\in [q]:(a,q)=1\}$ and $Z(q)$ the set of reduced quadratic residues $\{ b\in [q]^*:b\equiv h^2(\mod q)\}$.  Let $A$ be a set of integers and $f$ be a function on $A$, we denote $f(A)=\{f(a): a\in A\}$.  We will use $1_A$ to denote the characteristic function of $A$.
 
 	 For a finitely supported function $f:\Z\td \mathbb{\C}$, we define its Fourier transform by
 $$\hat f(\alpha)=\sum_{n}f(n)e(n\alpha),$$
 where $e(\cdot )=e^{2\pi i \cdot}$.  For a function $g:\Z/\R\td \mathbb{C}$ and $p>0$, we define its $L^p$ norm by 
 $$\|g\|_{L^p}=\begin{cases}
	\bb{\int_0^1|g(\alpha)|^pd\alpha }^{1/p},& 0<p<\infty, \\ 
	\sup_{\alpha \in \R/\Z}|g(\alpha)|,&p=\infty.
\end{cases}$$
and its Fourier transform by 
$$\hat g( k)=\int_{0}^1 g(\alpha )e(-k\alpha)d\alpha.$$
Let $h:\R/\Z\td \C$ be another function, we have the convolution 
$$g*h(\alpha)=\int_0^1 g(\alpha-u)\overline{h(u)}du. $$
  
   Let $f\in \C$ and $M,M_1,M_2\geq 0$. By $f\lesssim M$ we mean   $|f|\leq Cg$ holds for some constant $C>0$. The dependence of $C$ will appear in the subscript of $\lesssim $. The use of $\gtrsim $ is similar.  If we have both $M_1\lesssim  M_2$ and $M_1\gtrsim M_2$, we use $M_1\asymp M_2$ to denote it. Sometimes  we also use $f=O(M)$ to denote $f\lesssim M$ and by $f=o(M)$ we mean
 $$\lim_{M\td \infty} \frac{f}{M}=0.$$
  In particular, we use $f=o_M(1)$ to imply 
 $$ \lim _{M\td \infty} f=0.$$

 \section{ Transference  Argument}

 The first  ingredient is the following transference lemma,  which is a simple consequence of that of  \cite{Sal}.  The reader shall keep in mind that this is actually the final step of the whole argument. 
 \begin{lemma}[Transference Lemma]\label{lem:main}  Let $s\geq 2$, $M\geq 1$ be integers. For each $j\in [s]$, let $f_j$ and $\nu_j $ be functions on $[M]$, satisfying 
 \begin{enumerate}[(i)]
 	\item $0\leq f_j\leq \nu_j$,
 	 	\item  (Pseudorandomness) $\| \hat \nu_j-\widehat{1_{[M]}} \| _{L^{\infty}}= o\bb{ M}$,
 	\item ($q$-Restriction) $\|\hat f_j\|_{L^q}\lesssim_q  M^{1-1/q}$ holds for some $q\in (s-1,s)$.
 \end{enumerate}
 Assume that for some $\epsilon\in (0,1)$, they satisfy the mean condition
 $$\sum_{j\in [s]} \frac{\sum_{n\in [M]} f_j(n)}{M}\geq \frac{s}{2}+\epsilon   $$
 with
 $$\frac{\sum_{n\in [M]} f_j(n)}{M}\geq \epsilon$$
 for each $j\in [s]$.
 Let  $m$ be an integer  such that   \begin{equation}
  \label{eq:m-condition}
 	m=\frac{s}{2}M+o(M).
 \end{equation}
 Then for  $M\gtrsim_{\epsilon} 1$, there is a solution to 
 \begin{equation}
 \label{m-solution}
 	\begin{cases}m=\sum_{j\in [s]} n_j,\\
\prod_{j\in [s]}f_j(n_j)\neq 0.
 \end{cases}
 \end{equation}
 \end{lemma}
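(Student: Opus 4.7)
The plan is to reduce \eqref{m-solution} to a standard dense-model counting problem and apply Salmensuu's density-Waring estimate \cite{Sal}. Three steps: construct bounded dense models $f_j^\flat$ using (ii), transfer the representation count via Fourier using (iii), and invoke Salmensuu's count for bounded densities in $[M]$.

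\emph{Dense modeling.} From $0\le f_j\le \nu_j$ together with (ii), for each $j\in [s]$ I would produce a function $f_j^\flat\colon [M]\to [0,1]$ with the same total mass as $f_j$ and
$$\|\hat f_j - \widehat{f_j^\flat}\|_{L^\infty} = o(M).$$
This is the standard linear (Fourier) dense model theorem for majorized functions; the only input is the $L^\infty$-smallness of $\hat\nu_j - \widehat{1_{[M]}}$.

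\emph{Fourier hybrid.} Writing
$$T(g_1,\ldots,g_s) := \sum_{n_1+\cdots+n_s=m}\prod_j g_j(n_j) = \int_0^1 \prod_j \hat g_j(\alpha)\, e(-m\alpha)\, d\alpha,$$
I would telescope $T(f_1,\ldots,f_s)-T(f_1^\flat,\ldots,f_s^\flat)$ into $s$ hybrid integrals, each isolating one factor $\widehat{f_j-f_j^\flat}$. The remaining $s-1$ factors $\hat g_i$ each satisfy $\|\hat g_i\|_{L^q}\lesssim M^{1-1/q}$: for $g_i=f_i$ this is (iii), and for $g_i=f_i^\flat\le 1_{[M]}$ it follows from Hausdorff--Young applied to $f_i^\flat\in \ell^{q/(q-1)}$. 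Applying H\"older with exponent $r_j=q/(q-s+1)$ on the distinguished factor and $q$ on each of the remaining $s-1$ factors is legitimate since $q>s-1$; interpolating $\|\widehat{f_j-f_j^\flat}\|_{L^{r_j}}$ between the $L^\infty$ bound $o(M)$ from Step~1 and the $L^q$ bound $O(M^{1-1/q})$ gives $\|\widehat{f_j-f_j^\flat}\|_{L^{r_j}}=o(M^{(s-1)/q})$, so each hybrid integral is $o(M^{s-1})$. Summing the $s$ hybrid terms,
$$T(f_1,\ldots,f_s) = T(f_1^\flat,\ldots,f_s^\flat) + o(M^{s-1}).$$

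\emph{Dense count and conclusion.} The models $f_j^\flat$ are $[0,1]$-valued on $[M]$, with individual means at least $\epsilon$ and summed means at least $s/2+\epsilon$, and $m=sM/2+o(M)$ by \eqref{eq:m-condition}. Precisely in this regime Salmensuu's density-version Waring estimate \cite{Sal} delivers $T(f_1^\flat,\ldots,f_s^\flat)\gtrsim_\epsilon M^{s-1}$, which survives the $o(M^{s-1})$ error from Step~2 once $M\gtrsim_\epsilon 1$, giving $T(f_1,\ldots,f_s)>0$ as required. The one delicate point is the tightness of the assumption $q>s-1$ in Step~2: at the endpoint $q=s-1$ the H\"older step collapses ($r_j=\infty$), so the argument critically relies on any $L^q$-restriction with $q$ strictly above $s-1$; everything else is a routine specialization of Salmensuu's framework once the $L^q$-restriction has been verified for $f_j^\flat$ via Hausdorff--Young.
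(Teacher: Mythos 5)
Your proof reconstructs the transference machinery that the paper treats entirely as a black box. The paper's own proof of this lemma is three sentences: it cites Salmensuu's Proposition 3.9, which packages precisely hypotheses (i)--(iii) and the mean conditions into the conclusion $\sum_{n_1+\cdots+n_s=m}\prod_j f_j(n_j)\gtrsim_\epsilon M^{s-1}$, and then observes that a sum of nonnegative terms which is positive must contain a positive term. Your three-step argument (dense modelling via pseudorandomness, a telescoping Fourier-hybrid estimate using the $L^q$-restriction via H\"older and $L^\infty$--$L^q$ interpolation, and finally a dense count for bounded functions) is essentially a sketch of what lies inside that proposition; you are not taking a different route so much as walking the road rather than taking the bus, and in fact you still lean on \cite{Sal} for the bounded dense count in Step~3.

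A few of your details deserve caution, though none are fatal for the intended application. The dense model theorem does not give a model with \emph{exactly} the same total mass or with values exactly in $[0,1]$; both are achieved only up to $o(1)$ errors, which suffices but should be stated. Your Hausdorff--Young justification that $\|\widehat{f_i^\flat}\|_{L^q}\lesssim M^{1-1/q}$ requires $q\geq 2$; since $q\in(s-1,s)$, this silently excludes $s=2$ from the lemma as stated. (The paper only invokes the lemma with $s\geq 8$, so $q>7$, and Salmensuu's Proposition 3.9 carries its own hypotheses, so this does not affect the theorem, but your argument as written proves less than the lemma claims.) Finally, when you invoke ``Salmensuu's density-version Waring estimate'' for $T(f_1^\flat,\ldots,f_s^\flat)\gtrsim_\epsilon M^{s-1}$, you should identify the precise statement being used: the density threshold $s/2$ combined with the individual lower bounds $\epsilon$ and the target $m=sM/2+o(M)$ is a nontrivial extremal fact about bounded functions, and it is that fact---not the Fourier transference overhead---which is the real combinatorial content here.
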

 
 \begin{proof}Let $M \gtrsim_{\epsilon }1 $  be such that all  conditions of \cite[Proposition 3.9]{Sal} are satisfied. As a result of this, we have
 \begin{equation*}
 	\sum_{m=n_1+n_2+\cdots n_s}\prod_{j\in [s]}f_j(n_j)\gtrsim_{\epsilon } N^{s-1}.
 \end{equation*}
 Since $m=\sum_{j\in [s]}n_j$ has $O(M^{s-1})$ solutions, by pigeonhole principle the above inequality ensures the existence of one solution for which 
 $$\prod_{j\in [s]}f_j(n_j) \gtrsim_{\epsilon }1>0 .$$ 
 \end{proof}

Let $N$ be a large integer and $w=O(\log\log \log N)$.   We  introduce the module $W=8\prod_{2<p<w} p$. For the rest of this paper, we let $N$, $w$ and $W$ be always as above. Only in the proof of Theorem \ref{thm:main} their values will be made explicit.

  Let $b\in Z(W)$ and $H(b)=\{h\in [W]^*:h^2\equiv b(\mod W)\}$. By Chinese remainder theorem it is clear that $|H(b)|=4\prod_{2<p<w} 2$. We denote this quantity by $H$, which satisfies $H|Z(W)|=\phi(W)$. To apply Lemma \ref{lem:main}, we  need to construct desired pairs of functions on the set $\{n\in [N]:Wn+b\in \mathbb{P}^2\}$. Let   $P\subset \mathbb{P}$ be as in the statement of Theorem \ref{thm:main}. We now define  
    \begin{equation}\label{eq:f-def}
 	f_b(n)=\begin{cases} \frac{\phi(W)}{WH}(2p\log p),&p^2=Wn+b\in P^2, n\in [N],\\ 0,&otherwise, \end{cases}
 \end{equation} 
and S
\begin{equation}\label{eq:nu-def}\nu_b(n)=\begin{cases} \frac{\phi(W)}{WH}(2p\log p),&p^2=Wn+b\in \mathbb{P}^2,n\in [N] ,\\ 0,&otherwise, \end{cases}\end{equation} 
where $\phi$ is Euler's totient function.  Obviously they satisfy $0\leq f_b\leq \nu_b$.
 The proof of  the  following two propositions are left to Section 4 and Section 5.   

\begin{proposition}\label{prop: pseudo}Let   $\nu_b$ be given by \eqref{eq:nu-def}.  We have for any $\epsilon >0$ that  $$\| \hat \nu_b-\widehat{1_{[N]}} \|_{L^{\infty}}=O_{\epsilon}(w^{-1/2+\epsilon}N),$$
\end{proposition}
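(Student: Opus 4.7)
The plan is to estimate $\hat\nu_b(\alpha)$ by the Hardy--Littlewood circle method, comparing it with $\widehat{1_{[N]}}(\alpha)$. The substitution $n=(p^2-b)/W$ rewrites the Fourier transform as a weighted exponential sum over primes with a quadratic phase, restricted to a single square-residue class modulo $W$:
\[
\hat\nu_b(\alpha) = \frac{2\phi(W)}{WH}\,e(-b\alpha/W)\sum_{\substack{p^2\le WN+b\\p^2\equiv b\,(\mod W)}}p\log p\cdot e(p^2\alpha/W),
\]
so the task reduces to bounding a prime Weyl sum with quadratic phase.

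First I would apply Dirichlet's approximation theorem to $\alpha/W$, writing $\alpha/W=a/q+\beta$ with $(a,q)=1$, $q\le Q$, $|\beta|\le 1/(qQ)$, and partition into major arcs ($q\le w^{1-2\epsilon}$) and minor arcs ($q>w^{1-2\epsilon}$). On the minor arcs, Vaughan's identity combined with Weyl differencing yields a bound of the shape $|\sum_{p\le X}\log p\cdot e(p^2\theta)|\lesssim X\bb{q^{-1/2}+X^{-1/4}+q^{1/2}X^{-1}}(\log X)^C$ for $\theta=a/q+O(1/qQ)$. Partial summation to handle the $p$-weight, together with the restriction to the $H$ residue classes $c\,(\mod W)$ with $c^2\equiv b\,(\mod W)$, then gives a total contribution of $O_\epsilon(w^{-1/2+\epsilon}N)$. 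On the major arcs I would decompose the prime sum by residue classes modulo $qW$, apply the Siegel--Walfisz theorem (valid uniformly because $qW$ grows at most polylogarithmically in $N$), and convert the resulting sum into a singular integral $(\phi(qW))^{-1}\int_0^{X^2}e(\beta u)\,du$ multiplied by a quadratic Gauss sum. A Chinese remainder decomposition splits this Gauss sum into a factor modulo $q$ (of size $O(q^{-1/2})$ for $q>1$ and equal to $1$ for $q=1$) and a factor modulo $W$ which, summed over the $H$ square roots of $b$, combines with the normalization $\tfrac{2\phi(W)}{WH}$ to produce $1$. A change of variables $u=Wv$ turns the integral into $\widehat{1_{[N]}}(\alpha)$ up to an $O(1)$ boundary error, so the major-arc main term matches $\widehat{1_{[N]}}(\alpha)$ with remaining error $O(w^{-1/2+\epsilon}N)$.

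The main obstacle is extracting the sharp $w^{-1/2+\epsilon}$ savings rather than merely $(\log N)^{-A}$ decay: this demands the minor-arc threshold to be as small as $q=w^{1-2\epsilon}$, which in turn requires a genuinely polynomial $q^{-1/2}$ Weyl bound and a Siegel--Walfisz error small enough to be absorbed into $w^{-1/2+\epsilon}$. Careful tracking of the $W$-dependence in the quadratic Gauss sums modulo $W=8\prod_{2<p<w}p$, together with a uniform estimate for prime Weyl sums with quadratic phase, is where the technical work will be concentrated.
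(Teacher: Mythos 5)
Your overall framework---circle method with Siegel--Walfisz on major arcs, a Gauss-sum factorization, and an exponential-sum bound on minor arcs---is the same as the paper's. But there is a genuine gap in where the $w^{-1/2+\epsilon}$ saving is supposed to come from.

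You set the major/minor threshold at $q\le w^{1-2\epsilon}$ and claim that on major arcs the Gauss-sum factor is $O(q^{-1/2})$ for $q>1$. That gives, for $\alpha$ near $a/q$ with $q=2$ or $q=3$, a main term of size roughly $q^{-1/2}N$, which is nowhere near $O(w^{-1/2+\epsilon}N)$. The paper's key observation (Lemma~\ref{lem: exp} and Corollary~\ref{cor: exp}) is that the relevant exponential sum $S(q,a)$ \emph{vanishes identically} whenever $(q,W)\nmid 2$ or $q=2$---and since $W=8\prod_{2<p<w}p$, this forces $S(q,a)=0$ for every $1<q<w$. The first nonzero contribution from $q>1$ thus occurs only at $q\ge w$, where $|S(q,a)|\lesssim_\epsilon q^{-1/2+\epsilon}\le w^{-1/2+\epsilon}$. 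Your proposal treats the $W$-part of the Gauss sum as simply ``combining with the normalization to produce $1$,'' which is correct only at $q=1$; for $1<q<w$ it is the cancellation in the $W$-part (not the size of the $q$-part) that kills the term. Without this vanishing you cannot reach $w^{-1/2+\epsilon}$.

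A second, related issue is your minor-arc cutoff. With minor arcs starting at $q>w^{1-2\epsilon}$, a Weyl-type bound $X\bigl(q^{-1/2}+X^{-1/4}+q^{1/2}X^{-1}\bigr)(\log X)^C$ gives, near the threshold, roughly $w^{-1/2+\epsilon}N\cdot(\log N)^C$, which is much larger than $w^{-1/2+\epsilon}N$ since $w=O(\log\log\log N)$. The paper sidesteps this by taking $Q=(\log N)^A$ as the major-arc height, extending Siegel--Walfisz (together with the $S(q,a)$ analysis and the bound $|S(q,a)|\lesssim q^{-1/2+\epsilon}$) to the whole range $1\le q\le(\log N)^A$, and only invoking a Vinogradov/Hua-type minor-arc estimate (Lemma~\ref{lem: hua-expo}) once $q\ge(\log N)^A$, where the polynomial-in-$\log$ saving comfortably beats $w^{-1/2}$.
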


\begin{proposition}\label{prop: restriction}
Let $f _b$ be given by \eqref{eq:f-def}.  We have for any  $q>4$ that 
$$\|\hat{ f_b}\|_{L^{q}}\lesssim_q N^{1-1/q}.$$
\end{proposition}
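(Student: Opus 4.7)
The plan is to reduce $\|\hat f_b\|_{L^q}$ to an $L^q$-restriction estimate for the square exponential sum over primes, and then invoke Bourgain's restriction theorem for squares (equivalently Hua's inequality) in the range $q>4$.

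Unfolding the definition gives
$$\hat f_b(\alpha)=\frac{\phi(W)}{WH}\,e\!\left(-\frac{b\alpha}{W}\right)\sum_{p\in A}2p\log p\cdot e\!\left(\frac{p^2\alpha}{W}\right),$$
where $A=\{p\in P:p^2\equiv b\pmod W,\ p\leq P_0\}$ with $P_0=\sqrt{WN+b}$. Setting $F(\beta):=\sum_p 2p\log p\cdot 1_A(p)\,e(p^2\beta)$ and using its $1$-periodicity, the change of variable $\beta=\alpha/W$ yields
$$\|\hat f_b\|_{L^q}^q\leq W\bb{\frac{\phi(W)}{WH}}^q\|F\|_{L^q}^q.$$

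I would then bound $\|F\|_{L^q}$ by dyadic decomposition $F=\sum_P F_P$ with $F_P$ restricted to $p\in[P,2P]$, on which the weight $2p\log p$ is comparable to the constant $P\log P$. So $F_P/(2P\log P)$ is a square exponential sum with coefficients bounded by $1$ and supported in $[P,2P]$. Bourgain's restriction theorem for squares,
$$\int_0^1\bigg|\sum_{n\leq N}a_n\,e(n^2\beta)\bigg|^q d\beta\lesssim_q N^{q-2}\|a\|_\infty^q\qquad (q>4),$$
then gives $\|F_P\|_{L^q}\lesssim_q P^{2-2/q}\log P$. Summing dyadically over $P\leq P_0$ (dominated by $P\asymp P_0$ since $P^{2-2/q}$ is geometric in $P$) yields $\|F\|_{L^q}\lesssim_q (WN)^{1-1/q}\log N$. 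After cancelling the $W$-powers this gives
$$\|\hat f_b\|_{L^q}\lesssim_q \frac{\phi(W)}{H}\,N^{1-1/q}\log N.$$
Since $\phi(W)/H=\prod_{2<p<w}(p-1)/2\leq W=(\log\log\log N)^{O(1)}$, the factor $(\phi(W)/H)\log N$ is of size $N^{o(1)}$ and can be absorbed into the implicit constant for $N$ sufficiently large.

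The main technical input is Bourgain's restriction theorem for squares above the critical exponent $q=4$. The residue restriction $p^2\equiv b\pmod W$ defining $A$ causes no difficulty, since we simply dominate $1_A\leq 1$ and apply the theorem to squares of all integers up to $P_0$; no $W$-uniformity is required at the exponential-sum level, so all $W$-dependence appears as benign bookkeeping. If one wanted to avoid the extra logarithmic loss and tighten the constant, Abel summation on the weight $2p\log p$ combined with the prime-weighted form of Hua's inequality $\int_0^1|\sum_{p\leq P}(\log p)e(p^2\beta)|^4\,d\beta\lesssim P^{2}$ would do the job.
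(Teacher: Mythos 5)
You take a genuinely different route from the paper: you unfold the definition of $\hat f_b$, change variables $\beta=\alpha/W$ to reduce to the prime-square exponential sum $F(\beta)=\sum_{p\in A}2p\log p\,e(p^2\beta)$, dyadically decompose the range of $p$, and invoke Bourgain's $\Lambda(p)$-theorem for the squares as a black box. The paper instead works directly with $\hat f_b$ and derives a lossless level-set bound $f_*(u)\lesssim_q u^{-(q+4)/2}$ by combining a large-value lemma (Lemma \ref{lem: Bourgain}, which is the engine inside Bourgain's $\Lambda(p)$ proof) with the major/minor arc estimates for $\hat\nu$ (Propositions \ref{prop: upper-1}, \ref{prop: Type-2}) and a fourth-moment estimate (Lemma \ref{lem: critical value}), then sums over level sets; crucially, the $(\log N)$-type losses appearing in each ingredient are traded for tiny powers of $u^{-1}$ on the relevant $u$-range and absorbed before the final summation.

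There is a real gap in your closing step. You arrive at
\begin{equation*}
\|\hat f_b\|_{L^q}\lesssim_q \frac{\phi(W)}{H}\,N^{1-1/q}\log N,
\end{equation*}
and claim the factor $(\phi(W)/H)\log N=N^{o(1)}$ ``can be absorbed into the implicit constant for $N$ sufficiently large.'' This is false: the implicit constant in $\lesssim_q$ must be independent of $N$ (and hence of $W$ and $b$, which are functions of $N$), whereas $(\phi(W)/H)\log N\geq \log N\to\infty$. So what you actually establish is $\|\hat f_b\|_{L^q}\lesssim_q N^{1-1/q+o(1)}$, which is strictly weaker than the proposition and in particular is not of the form required by condition (iii) of Lemma \ref{lem:main}. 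Switching to the $\ell^2$ form of Bourgain's inequality, $\|\sum_{n\leq N}a_n e(n^2\beta)\|_{L^q}\lesssim_q N^{1/2-2/q}\|a\|_{\ell^2}$, only improves the loss to $\sqrt{\phi(W)\log N/H}$, which is still unbounded. To reach the lossless bound along your route one must extract a level-set (large-value) estimate from the restriction machinery and pair it with a separate moment bound that handles small thresholds, exactly as in the paper; a single application of the $L^q$ extension estimate for squares with bounded weights cannot remove these logarithmic and $W$-dependent factors.
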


To verify the mean value condition, we need the a lemma on the additive structure of dense subsets of $Z(W)$, which is proved by the technique of downsets.  Our argument  is  motivated by \cite{Sal}. 

  Let $q$ be a square free number. For each $a\in \Z_q$ and $p\mid q$, we denote its $p$-coordinate $a^{(p)}\in \{0,1,2,..,p-1\}$ by $a^{(p)}\equiv a(\mod p)$.  We then define  
$$D(a)= \{b\in \Z_q: b^{(p)}\leq  a^{(p)}, \forall p\mid q\}.$$     
We say $A\subset \Z_q$ is a downset if $a\in  A$ implies  $D(a)\subset A$. It is clear that the addition of two downsets is still a downset. For convenience, we will temporarily  use   $sA$ to denote the $s$-fold addition $A+A+\cdots +A$. 
\begin{lemma}\label{lem:sum} Suppose $w\geq 4$ and  $E\subset Z(W)$. If $|E|>\frac{1}{2}|Z(W)|$, then for every $n\equiv 8(\mod 24)$, we have 
$$n\equiv \sum_{j=1}^8b_j (\mod W),$$
with  $b_j\in E$, $1\leq j\leq  8$.
\end{lemma}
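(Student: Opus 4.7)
Plan. My approach is a compression-to-downset argument modelled on Salmensuu \cite{Sal}. First I reduce via CRT: writing $W = 8q$ with $q = \prod_{2<p<w} p$, one has $\Z_W \cong \Z_8 \times \Z_q$, and since $Z(8) = Z(3) = \{1\}$, every $b \in Z(W)$ is $\equiv 1 \pmod{24}$. Hence $\sum_{j=1}^8 b_j \equiv 8 \pmod{24}$ is automatic, and it suffices to show that for every $n' \in \Z_{q'}$ with $q' := \prod_{3 < p < w} p$ there exist $b_1,\ldots,b_8 \in E$ with $\sum_j b_j \equiv n' \pmod{q'}$. I will regard $E$ as a subset of $\Z_{q'}$ via CRT throughout.

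Next I compress. For each prime $p \mid q'$, I apply the $p$-coordinate compression to $E$: fixing the coordinates at the primes other than $p$, replace the resulting $p$-fiber of $E$ by the initial segment of $\{0,1,\ldots,p-1\}$ of the same cardinality. Iterating these yields $\tilde E \subset \Z_{q'}$ with $|\tilde E| = |E|$ which is a downset in the sense defined in the statement. A fiberwise Cauchy--Davenport argument shows that these compressions do not enlarge iterated sumsets in $\Z_{q'}$, so any residue lying in $8\tilde E$ also lies in $8E$; it therefore suffices to prove $8\tilde E = \Z_{q'}$.

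The heart of the proof is the analysis of the downset. Each projection $\pi_p(\tilde E)$ is an initial segment $[0, m_p]$ of $\{0,\ldots,p-1\}$, and because $E \subset Z(W)$ forces each $p$-fiber of $E$ into $Z(p)$, one has $m_p + 1 \leq (p-1)/2$. The density hypothesis combined with the box bound $|\tilde E| \leq \prod (m_p + 1)$ yields
\[
\prod_{3 < p < w} \frac{m_p + 1}{(p-1)/2} > \frac{1}{2}.
\]
Each factor lies in $(0, 1]$, so each factor must strictly exceed $1/2$, whence $m_p + 1 > (p-1)/4$ uniformly in $p$; the choice of exponent $8$ is then exactly what Cauchy--Davenport in $\Z_p$ requires to conclude $8\pi_p(\tilde E) = \Z_p$ for every $p$. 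The main obstacle, as I see it, is then passing from this coordinatewise coverage to the joint coverage $8\tilde E = \Z_{q'}$, since a downset with full projections need not equal the full product (for instance $\{(0,0),(0,1),(1,0)\} \subset \Z_2^2$). My plan here is an induction on the number of primes in $q'$: for any prime $p_0 \mid q'$, the largest fiber of $\tilde E$ in the $p_0$-direction has size strictly greater than half of $\prod_{p \neq p_0}(p-1)/2$, which triggers the inductive hypothesis inside $\Z_{q'/p_0}$, and the ``sum of two downsets is a downset'' observation from the statement then allows one to glue the fiber-level coverages into the desired global identity.
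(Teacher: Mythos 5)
Your reduction via CRT to $\Z_{q'}$ (the paper's $\Z_{W'}$), the observation that $Z(24)=\{1\}$, the compression to a downset $\tilde E$, and the use of the non-increasing property of iterated sumsets under compression all agree with the paper's argument, which invokes exactly this machinery (via \cite[Lemma~5.8]{Sal}) to pass from $E$ to a downset $E'$ with $|8E|\geq|8E'|$.

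Where you diverge is after the compression, and this is where your argument has a genuine gap. You derive, from the density hypothesis and the box bound $|\tilde E|\leq\prod(m_p+1)$, the coordinatewise lower bound $m_p+1>(p-1)/4$, which gives $8\pi_p(\tilde E)=\Z_p$ for each $p$. You then correctly identify that coordinatewise coverage does not imply $8\tilde E=\Z_{q'}$ (your $\{(0,0),(0,1),(1,0)\}$ example), and propose to fix this by induction on the number of primes: the $p_0$-zero fiber has density $>1/2$ in $\Z_{q'/p_0}$, so by induction its $8$-fold sumset is all of $\Z_{q'/p_0}$, and you want to ``glue.'' But the gluing is not substantiated, and I do not see how to make it work with the tools you list. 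The difficulty is that hitting a nonzero target $p_0$-coordinate forces you to spend summands with nonzero $p_0$-coordinate, and those summands live in smaller fibers, so you lose control of the other coordinates; the inductive conclusion $8\tilde E_0=\Z_{q'/p_0}$ plus $8\pi_{p_0}(\tilde E)=\Z_{p_0}$ does not combine because you only have $8$ summands total to spend across all coordinates simultaneously. (A crude split ``$4$ summands in the zero fiber, $4$ to cover the $p_0$-coordinate'' gives only $4m_p$ in each remaining coordinate, which fails for small $p$.) The ``sum of two downsets is a downset'' observation does not bridge this.

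The paper's argument sidesteps the gluing entirely, and your own framework can reach it directly without any induction or projection analysis. After compression, $\tilde E$ lies in the box $D(u-1)=\prod_p\{0,\dots,(p-1)/2-1\}$ of cardinality $|Z(W')|$. Since $(u-1)-\tilde E\subset D(u-1)$ as well and $|\tilde E|+|(u-1)-\tilde E|=2|\tilde E|>|D(u-1)|$, the pigeonhole gives $u-1\in\tilde E+\tilde E$. Because $\tilde E+\tilde E$ is a downset (your observation), this forces $D(u-1)\subset\tilde E+\tilde E$, and a one-line computation using $p\geq 5$ shows $4D(u-1)=\Z_{q'}$, hence $8\tilde E=\Z_{q'}$. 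This pigeonhole step, which your write-up omits, is the crux; the projection bounds $m_p+1>(p-1)/4$ are strictly weaker than it (they give only the projections of the apex $u-1$, not the apex itself in $\tilde E+\tilde E$) and, as above, are not enough on their own. You should also note the trivial cases $w\in\{4,5\}$ (i.e.\ $q'=1$), which the paper disposes of separately.
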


 \begin{proof}Since $w\geq 4$ we can write $W=24W'$ with $ (W',24)=1$.  Note that for any $b\in Z(W)$ we have $b\equiv 1(\mod 24)$ and  for any $b\neq b'\in Z(W)$ we have $W'\nmid (b-b')$.  We can  thus regard $E$ as a subset of $\Z_{W'}$, and  by  Chinese remainder theorem it now suffices to show $8E=Z_{W'}$.  This is trivial if $W'=1$ and we  assume that $w\geq 6$. 
   
  Let $u\in \Z_{W'}$ be such that $u^{(p)}= (p-1)/2$, $\forall p\mid W'$. Then $|D(u-1)|=|Z(W')|$. By \cite[Lemma 5.8]{Sal}, there exists a downset $E'\subset D(u-1)$ such that $|E'|=|E|$ and $|8E|\geq |8E'|$. Noticing that  $(u-1)-E'\subset D(u-1)$   and $|E
 '|+|(u-1)-E'|>|Z(W)|=|Z(W')|=|D(u-1)|$, we conclude  $u-1\in E'+E'$. Since $E'+E'$ is still a downset, this means $D(u-1)\subset E'+E$. By the fact that  $5$ is the smallest prime factor of $W'$, it is easy to check $4D(u-1)=\Z_{W'}$ and thus $8E'=\Z_{W'}$.  Recall that $|8E|\geq |8E'|$, which soon  concludes the proof.
   \end{proof}

\begin{proof}[Proof of Theorem 1.1]

 Let $\lambda=\sqrt{1-\min\{s,16\}/32}$ and suppose that we have $\delta_P\geq \lambda +\kappa $ for some $\kappa\in (0,1/4)$.
For  $n\equiv s(\mod 24)$ we  set 
$w=\log \log \log n$     and  $N=\lfloor 2n/sW \rfloor$. It is clear that  as $N\td \infty$,
$$w=O(\log\log\log  N),\quad \frac{n}{W}=\frac{s}{2}N+o(N).$$
For each $b\in Z(W)$, we can now let $f_b$ be given by \eqref{eq:f-def}, and then consider its mean value \begin{equation}\label{eq: density-def2}
	\delta(N;b)=  \frac{\sum_{n\in [N]}f_b(n)}{N}.
\end{equation}  
 By prime number theorem in arithmetic progressions (see \cite[Theorem 8.16]{Ten}) and \eqref{eq: density-def1}, we have
$$\sum_{b\in Z(W)} \delta(N;b)=\frac{\phi(W)}{NWH}\sum_{p\in P\cap [\sqrt{W+b}, \sqrt{WN+b}]} 2p\log p\geq (1+o_N(1)) \frac{\phi(W)}{H} \delta_P^2  .$$
Recall that $H|Z(W)|=\phi(W)$.  The above  therefore yields
\begin{equation}\label{eq: density-eq}\begin{split}
	\frac{1}{|Z(W)|}\sum_{b\in Z(W)} \delta(N;b)\geq (1+o_N(1))\delta_P^2	\end{split}\geq \lambda^2+\frac{\kappa}{2}.
\end{equation}
 provided $N\gtrsim_{\kappa} 1$. Let $\mu=\delta(b_0;N)=\max_{b\in Z(W)} \delta(b;N)$. By pigeonhole principle, we have  for any $\epsilon \in (0,1/8)$ that $$\delta(N;b)\geq \frac{\lambda^2+\kappa/2-(1/2+\epsilon)\mu }{1/2-\epsilon}$$ 
 holds for at least a $(1/2+\epsilon)$ propotion of  values of $b\in Z(W)$.  Taking $\epsilon =\kappa/4$ this yields
 $$ \left|\left\{b\in Z(W): \delta(N;b)\geq 2\lambda^2-\mu+\frac{\kappa}{4}\right\}\right|\geq \bb{\frac{1}{2}+\frac{\kappa}{4}}|Z(W)|.$$ 
 Now we can apply  Lemma \ref{lem:sum}  and obtain    $n-(s-8)b_0\equiv \sum_{j=1}^8 b_j(\mod W)$ with $\delta(N;b_j)\geq 2\lambda^2-\mu+\kappa/4$, $1\leq j \leq 8$.  Note that  $$ (s-8)\delta(b_0;N)+\sum_{j=1}^8 \delta(b_j;N)\geq  (s-16)\mu+16\lambda^2+2\kappa$$
 and 
 $$ \lambda^2+\frac{\kappa}{2}\leq \mu\leq 1.$$
A case-by-case argument on $s$ will give
 $$  (s-8)\delta(b_0;N)+\sum_{j=1}^8 \delta(b_j; N)\geq \frac{s}{2}+\kappa.$$

    We now expand the set $\{b_1,...,b_8\}$ to a set of  $s$ elements  by adding $(s-8)$ copies of $b_0$, and then relabel its members by $\{b_j\}_{j\in [s]}$.  The value of 
  \begin{equation}\label{eq: decomp-m}
 	m=\frac{n-\sum_{j\in [s]} b_j}{W}
 \end{equation} 
  is obviously $sN/2+o(N)$.
  Let $f_j$ by $\nu_j$ be defined  by  \eqref{eq:f-def} and \eqref{eq:nu-def}   respectively, with $b=b_j$, $j\in [s]$.  The above argument shows that they satisfy the density condition of Lemma \ref{lem:main}, with $\epsilon$ replaced by $\kappa$ and $M$ replaced by $N$.  By Proposition \ref{prop: pseudo} and  Proposition \ref{prop: restriction}, we have the desired pseudorandomness and $q$-restriction as $s-1\geq 7>4$.  Now we apply  Lemma \ref{lem:main}  to this $m$  and then obtain  a solution $\{n_j\}_{j\in [s]}$ to \eqref{m-solution}. 
 By definition, $f_{j}(n_j)> 0$ implies $Wn_j+b_j\in P^2$, $j\in [s]$. Hence by \eqref{eq: decomp-m} we conclude the proof.

\end{proof}

\section{Pseudorandom condition}
 Let   $\nu=\nu_b$  be given by \eqref{eq:nu-def} with  $b\in Z(W)$  fixed.  Let  $A\geq 1$ and $Q=(\log N)^A$. We now identify $\R/\Z$ with the interval $(Q/N,1+Q/N]$ and then define the major arcs by $$\mathfrak{M}=\bigcup_{\substack{1\leq q\leq Q\\ a\in [q]^*}}\mathfrak{M}(q,a)=\{\alpha \in \R/\Z: |\alpha-a/q|\leq Q/qN\}.$$ 
  and define the minor arcs by $\mathfrak{m}=(Q/N,1+Q/N]\setminus \mathfrak{M}$. We will always let  $N\gtrsim 1$ to ensure $N>2Q^2$.

Since $p^2\equiv b (\mod W)$ if and only if $p\equiv h(\mod W)$ for some $h\in H(b)$, $\hat \nu(\alpha)$ has the decomposition
\begin{equation}\label{eq: decomp-nu}
	\begin{split}
	\hat \nu(\alpha)&=\sum_{n\in [N]}\nu(n)e(n\alpha)
	\\&=	\frac{\phi(W)}{WH}\sum_{\substack{p\in  \sqrt{W[N]+b}} }(2p\log p)e\bb{\frac{p^2-b}{W}\alpha }
	\\&=\frac{\phi(W)}{WH}\sum_{h\in H(b)}  \sum_{\substack{p\in  \sqrt{W[N]+b} \\ p\equiv h (\mod W) }}(2p\log p) e\bb{\frac{p^2-b}{W}\alpha}
	\\ &=\frac{\phi(W)}{WH} \sum_{h\in H(b)} \sum_{\substack{p\leq \sqrt{WN+b}\\ p=Wm+h}}(2p\log p)e\bb{\frac{(Wm+h)^2-b}{W}\alpha}+O(W).
	\end{split}
\end{equation}
Write  $\alpha =a/q+\beta$ with $q\leq Q$ and $a\in [q]^*$.  Then  by Siegel-Walfisz theorem (see \cite[Theorem 8.17]{Ten} ) and partial integration we have   \begin{equation*}
\begin{split}
&\sum_{\substack{p\leq \sqrt{WN+b}\\ p=Wm+h}}(2p\log p)e\bb{\frac{(Wm+h)^2-b}{W}\bb{\frac{a}{q}+\beta}}\\=&\sum_{l\in  [q]} e\bb{\frac{((Wl+h)^2-b)a}{qW}}e\bb{-\frac{b \beta}{W}} \sum_{\substack{p\leq \sqrt{WN+b}\\ p\equiv Wl+h(\mod qW)}}(2p\log p)e\bb{\frac{p^2}{W}\beta}\\ =&\sum_{\substack{l\in [ q]\\ (Wl+h,qW)=1}} e\bb{\frac{((Wl+h)^2-b)a}{qW}}(1+O(|\beta|)) \frac{W}{\phi(qW)} \int_0^Ne(\beta u)du 
\\&+O((1+|\beta|N)Ne^{-c_A\sqrt{\log N}})	
\end{split}
\end{equation*}
where $C_A$  an ineffective constant depending only on $A$.   
Inserting this back to \eqref{eq: decomp-nu} and  expanding the square, we arrive at the following  asymptotic formula on major arcs.
 \begin{proposition}\label{prop: type-1-aym} Let  $\alpha\in \mathfrak{M}(q,a)$ with $q\leq Q$ and $a\in [q]^*$. We have
 $$\hat \nu(\alpha)=\frac{\phi(W)}{\phi(qW)}S(q,a)\int_0^Ne \bb{ \bb{\alpha-a/q}u}du+O_A\bb{N(\log N)^{-2A}},$$
 where
 \begin{equation}\label{eq: def-S}
 	S(q,a)=\frac{1}{H}\sum_{h\in H(b)} e\bb{\frac{h^2-b}{qW} }\sum_{\substack{l\in [q]\\(Wl+h,qW)=1}}e\bb{\frac{(Wl^2+2hl)a}{q}}.
 \end{equation}
 \end{proposition}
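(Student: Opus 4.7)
The plan is to directly assemble the computation already set up in the excerpt. Starting from the decomposition \eqref{eq: decomp-nu}, I would write $\alpha = a/q+\beta$ with $|\beta|\leq Q/(qN)$, and further split the innermost progression $p\equiv h \pmod W$ into $q$ finer residue classes $p\equiv Wl+h \pmod{qW}$ indexed by $l\in [q]$. On each subprogression the phase $e(((Wm+h)^2-b)a/(qW))$ is constant in $m$, so only the $\beta$-part retains any dependence on the prime variable. The Siegel-Walfisz theorem combined with partial summation, as displayed just above the proposition, then evaluates each inner sum as $\frac{W}{\phi(qW)}\int_0^N e(\beta u)\,du$ with an admissible error, provided $(Wl+h, qW)=1$; subprogressions with a common factor carry at most $O(\log^2 N)$ primes and are negligible.

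The key algebraic step is to use $(Wl+h)^2 = W^2 l^2 + 2Whl + h^2$, together with $h\in H(b)$ (so $(h^2-b)/W$ is an integer), to split the constant phase as
\[
\frac{((Wl+h)^2-b)a}{qW} \equiv \frac{(Wl^2+2hl)a}{q} + \frac{(h^2-b)a}{qW} \pmod 1,
\]
where the first summand depends only on $l$ and the second only on $h$. Summing over $l\in[q]$ subject to the coprimality condition, and then over $h\in H(b)$, reproduces exactly the double sum appearing inside $S(q,a)$. Multiplying by the prefactor $\phi(W)/(WH)$ from \eqref{eq:nu-def} and by the Siegel-Walfisz density $W/\phi(qW)$, and absorbing the normalizing $1/H$ into the definition of $S(q,a)$, the main term collapses to $\frac{\phi(W)}{\phi(qW)}S(q,a)\int_0^N e(\beta u)\,du$. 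The stray factor $e(-b\beta/W) = 1+O(Q/N)$ from \eqref{eq: decomp-nu} is absorbed into the error.

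For the error analysis, the Siegel-Walfisz remainder contributes $(1+|\beta|N)Ne^{-c_A\sqrt{\log N}}$ per subprogression. Since $|\beta|N\leq Q = (\log N)^A$ and the number of $(l,h)$ pairs is at most $qH \leq Q\phi(W)$, while $W$ grows only polylogarithmically in $N$, the total error is bounded by some fixed power of $\log N$ times $Ne^{-c_A\sqrt{\log N}}$, which is $O_A(N(\log N)^{-2A})$ for $N$ large. The $O(W)$ boundary term in \eqref{eq: decomp-nu} is trivially negligible. I do not anticipate a genuine obstacle beyond careful bookkeeping; the one point requiring attention is that $(Wm+h,qW)=1$ reduces to $(Wl+h,q)=1$, which follows because $(h,W)=1$ forces $(Wm+h,W)=1$ automatically, so the coprimality condition is independent of the $W$-part of $qW$ and depends only on $l\pmod q$, exactly as it appears inside $S(q,a)$.
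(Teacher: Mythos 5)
Your argument reproduces the paper's proof: split $\alpha = a/q+\beta$, refine the progression $p\equiv h \ (\mathrm{mod}\ W)$ into classes $p\equiv Wl+h \ (\mathrm{mod}\ qW)$ on which the $a/q$-phase is constant, evaluate each class by Siegel--Walfisz with partial summation (discarding the negligible non-coprime classes), and reassemble the prefactor $\frac{\phi(W)}{WH}\cdot\frac{W}{\phi(qW)}$ together with the $(l,h)$-sum into $\frac{\phi(W)}{\phi(qW)}S(q,a)$. One remark: your algebra correctly yields the factor $e\bigl(\tfrac{(h^2-b)a}{qW}\bigr)$, with the $a$, whereas \eqref{eq: def-S} as printed has $e\bigl(\tfrac{h^2-b}{qW}\bigr)$ without it; the version with $a$ is the right one (it is precisely what makes the $h$-dependence collapse in Lemma~\ref{lem: exp}(ii)), so the paper's displayed definition of $S(q,a)$ contains a typographical slip.
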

To proceed further we need to analyze the value of  $S(q,a)$. It  will  be shown  that  $S(q,a)$ is in fact a sum of Gauss sums of the form
 $$G(k,r)= \sum_{\substack{l^*\in [ k]^*}}e\bb{ \frac{r {l^*}^2}{k}},$$
 which obey the bound (see \cite[Theorem 8.5]{Hua2})
 
 \begin{equation}\label{eq: Gauss}
 	|G(k,r)|\lesssim_{\epsilon} k^{1/2+\epsilon}
 \end{equation}
 for any $\epsilon >0$ and $(k,r)=1$.
 
\begin{lemma}\label{lem: exp} Let $S(q,a)$ be as in \eqref{eq: def-S}.
\begin{enumerate} [(i)]
\item If $(q,W)\nmid 2$, then $S(q,a)=0$.
\item If $(q,W)=1$ and let $W^{-1}$ be the inverse of $W(\mod q)$, then $$S(q,a)=e\bb{-\frac{W^{-1}ba}{q}}G(q,W^{-1} a).$$	
\item If $(q,W)=2$ and let $(2W)^{-1}$ be the inverse of $2W(\mod q/2)$, then
$$ S(q,a)=\frac{2}{H} \sum_{h\in H(b)}e\bb{\frac{(h^2-b)a}{qW}-\frac{(2W)^{-1}h^2a}{q/2}} G(q/2,(2W)^{-1}a).$$
\item If $q=2,$ then $S(q,a)=0$.
\end{enumerate}	
\end{lemma}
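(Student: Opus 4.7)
The common thread in all four cases is the identity $W(Wl^2+2hl)=(Wl+h)^2-h^2$, which, whenever $(W,q)=1$, turns the inner sum over $l$ into a Gauss sum in the variable $k\equiv Wl+h\pmod{q}$. I would dispatch the cases in the order (ii), (i), (iii), (iv). For (ii), the map $l\mapsto k\equiv Wl+h\pmod{q}$ is a bijection $[q]\to[q]$, and the constraint $(Wl+h,qW)=1$ reduces to $(k,q)=1$ since $k\equiv h\pmod{W}$ automatically gives $(k,W)=1$. The inner sum then becomes
\[
\sum_{k\in[q]^*} e\!\left(\tfrac{W^{-1}(k^2-h^2)a}{q}\right)=e\!\left(-\tfrac{W^{-1}h^2a}{q}\right)G(q,W^{-1}a).
\]
Rewriting the outer factor as $e(W^{-1}(h^2-b)a/q)$ (using $(h^2-b)/W\in\mathbb{Z}$), the product cancels the $h$-dependence and leaves $e(-W^{-1}ba/q)\,G(q,W^{-1}a)$, which is (ii).

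For (i), set $d=(q,W)$ with $d\nmid 2$ and translate $l\mapsto l+q/d$. Since $W(q/d)\equiv 0\pmod{q}$ this permutes the admissible $l$'s. A short calculation using $d\mid W$ and $d^2\mid Wq$ shows that the quadratic and mixed contributions to the exponent are integers, so the exponent shifts only by $2ha/d\pmod{1}$. From $(h,W)=(a,q)=1$ one has $(ha,d)=1$, and together with $d\nmid 2$ this gives $e(2ha/d)\neq 1$, forcing the inner sum to vanish. For (iii), write $q=2q'$ with $q'$ odd and $(q',W)=1$, and split $l$ by CRT into $(l_1,l_2)\in\mathbb{Z}/2\times\mathbb{Z}/q'$. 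The mod-$2$ coordinate is unconstrained, because $Wl+h$ is always odd; the contribution of the quadratic phase on $l_1$ is trivial since $(Wl^2+2hl)/2=(W/2)l^2+hl\in\mathbb{Z}$, so summing over $l_1$ yields a factor of $2$. The remaining $l_2$-sum is handled exactly as in (ii), with $W^{-1}$ replaced by $(2W)^{-1}$ after absorbing the $2^{-1}$ from the CRT split of $a/q$, producing the stated $G(q/2,(2W)^{-1}a)$ and phase.

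Case (iv) is where I expect the main difficulty, because it is not a formal specialisation of (iii) with $q'=1$: one is left with $G(1,\cdot)=1$ times an $h$-sum that must be shown to vanish on its own. The plan is to reduce to
\[
\sum_{h\in H(b)} e\!\left(\tfrac{(h^2-b)a}{2W}\right)=0
\]
and to exhibit a fixed-point-free involution $\sigma\colon H(b)\to H(b)$ that flips the parity of $(h^2-b)/W$. The natural candidate is $\sigma(h)=h+W/2\pmod{W}$: it preserves $[W]^*$ because $W/2$ is divisible by $2$ and by every odd prime factor of $W$, and it preserves $h^2\equiv b\pmod{W}$ since $(h+W/2)^2\equiv h^2\pmod{W}$. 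The decisive computation, using $8\mid W$ so that $W^2/4\equiv 0\pmod{2W}$, gives $(h+W/2)^2\equiv h^2+W\pmod{2W}$; thus $\sigma$ increments $(h^2-b)/W$ by $1$ and flips the sign of the phase, so pairing cancels the sum. The hypothesis $8\mid W$ is essential here, which is precisely the point of choosing the modulus $W=8\prod_{2<p<w}p$.
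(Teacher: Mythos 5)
Your proof is correct and follows essentially the same route as the paper. For (i)--(iii) you reach the same Gauss sums via the change of variable $k \equiv Wl+h \pmod q$ (with the vanishing in (i) exhibited by a translation $l\mapsto l+q/d$, which is the same cancellation the paper extracts by parametrising the fibre of this map as $l=yq/t+z$, $y\in[t]$); and for (iv) your fixed-point-free involution $\sigma(h)=h+W/2$ is precisely the paper's decomposition $h=u+4Kv$, $v\in\{0,1\}$, phrased intrinsically, with the same use of $8\mid W$ to get $(h+W/2)^2\equiv h^2+W\pmod{2W}$.
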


\begin{proof} 

\begin{enumerate}[(i)]
\item Since $(Wl+h,W)=(h,W)=1$, the inner sum can be written as
 \begin{equation}\label{eq: exp-decomp}
 	 \sum_{\substack{l\in  [q]\\ (Wl+h,q)=1}} e\bb{\frac{(Wl^2+2hl)a}q}=\sum_{l^*\in [ q]^*}  \sum_{Wl+h\equiv l^* (\mod q)}e\bb{\frac{(Wl^2+2hl)a}q}
 \end{equation}
 Let  $t=(q,W)$, when $t\mid (l^*-h)$, $Wl+h\equiv l^* \mod q $ is solvable with solutions of the form
 \begin{equation}\label{eq: roots}
 	 l= yq/t+z, \quad y\in [t],
 \end{equation}
 from which we derive that 
 \begin{equation}\label{eq: canc}
 	\begin{split}
 	   \sum_{Wl+h\equiv l^* (\mod q)}e\bb{\frac{(Wl^2+2hl)a}q}&=\sum_{y\in [ t]}e\bb{\frac{(Wz^2+2hq/t+2hz)a}{q}}
 	   \\&=e\bb{\frac{(Wz^2+2hz)a}{q}} \sum_{y\in  [t]}e\bb{\frac{2ha}{t}},
 	\end{split}
 \end{equation}
 This vanishes if $t\nmid 2$.
 \item When $t=1$, let $W^{-1}$ be the inverse of $W(\mod q)$.  Then  $z=W^{-1}(l^*-h)$ in \eqref{eq: roots}. We have
  $$ Wz^2+2hz\equiv W^{-1}\bb{{l^*}^2-h^2} (\mod q)$$
  and thus by the fact $(h^2-b)/W \equiv W^{-1}(h^2-b)(\mod q)$ and \eqref{eq: def-S}-\eqref{eq: canc}, 
  $$S(q,a)=e\bb{\frac{-W^{-1}ba}{q}} G(q,W^{-1}a).$$
 \item When $t=2$, since $2\mid (l^*-h)$, $Wl+h\equiv l^* \mod q$ is always sovable. Let $W_0=W/2$, $q_0=q/2$  and $W_0^{-1}$ be the inverse  of  $W_0(\mod q_0)$. Since $4\mid W$, we know that $q_0$ is odd, which means  $2^{-1}(\mod q_0)$ exist .   We have now $z=(2W_0)^{-1}(l^*-h)$ and 
$$ e\bb{\frac{(Wz^2+2hz)a}{q} }=e\bb{\frac{(W_0z^2+hz)a}{q_0}}=e\bb{\frac{(4W_0)^{-1}({l^*}^2-h^2)a}{q_0}}.$$
 We collect these facts and derive from \eqref{eq: def-S}-\eqref{eq: canc} that
$$ S(q,a)=\frac{2}{H} \sum_{h\in H(b)}e\bb{\frac{(h^2-b)a}{qW}-\frac{(2W)^{-1}h^2a}{q_0}} \sum_{\substack {l^*\in [ q]^*}}e\bb{\frac{(2W)^{-1}a{l^*}^2}{q_0}}.$$
Given $l_1^*, l_2^* \in [ q]^*$ with $(l_1^*l_2^*,q)=1$ and $l_1^*\neq l_2^*$. Since $2\mid (l_1^*-l_2^*)$, we can not have $q_0\mid (l_1^*-l_2^*)$. Thus $l^*$ travels through $\phi(q)$ different values $(\mod q_0)$.  By the fact   $\phi(q)=\phi(q_0)$ ,  we have $$\sum_{\substack {l^*\in [q]^*}}e\bb{\frac{(2W)^{-1}a{l^*}^2}{q_0}}=G(q_0,(2W)^{-1} a).$$
\item When $q=2$ we have $a=1$ and only need to consider 
 $$S(2,1)=\frac{2}{H}\sum_{h\in H(b)} e\bb{\frac{h^2-b}{2W}}.$$
  Write $W=8K$ with $K=\prod_{2<p<w}p$. Then each $x\in [W]$ is of the form $u+4Kv$ with $v\in \{0,1\}$ and $u\in [4K]$. Moreover, since
   $$x^2\equiv u^2+8K(uv+2Kv)\equiv u^2 (\mod W),$$
     we obtain
   $$ S(2,1)=\frac{1}{H} \sum_{\substack{u\in [4K]\\u^2\equiv b (\mod W)\\ }} e\bb{\frac{u^2}{16K}}\sum_{v\in \{0,1\}} e\bb{\frac{uv}{2}}.$$
   Note that $u^2\equiv b ( \mod W)$ implies  $u$ is odd. The above sum thus vanishes.  
\end{enumerate}
\end{proof}
 Combing Lemma \ref{lem: exp} and \eqref{eq: Gauss}, the following estimate of $S(q,a)$ is now available.

\begin{corollary}\label{cor: exp}For any $\epsilon >0$ and $(a,q)=1$,  we have  $|S(q,a)|\lesssim_{\epsilon}  q^{\frac{1}{2}+\epsilon}$. In particular, when  $(q,W)\nmid 2$ or $q=2$, we have  $S(q,a)=0$. \end{corollary}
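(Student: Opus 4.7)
My plan is to deduce the corollary directly from the four cases of Lemma \ref{lem: exp}, combined with the Gauss sum bound \eqref{eq: Gauss}. The vanishing claims are immediate: case (i) of Lemma \ref{lem: exp} handles $(q,W)\nmid 2$, and case (iv) handles $q=2$, so in both situations $S(q,a)=0$ and the bound $|S(q,a)|\lesssim_\epsilon q^{1/2+\epsilon}$ holds trivially. It therefore remains to establish the nontrivial size bound in the two cases where $S(q,a)$ is expressed in terms of Gauss sums.

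In case (ii), when $(q,W)=1$, Lemma \ref{lem: exp}(ii) gives $S(q,a)=e\bb{-W^{-1}ba/q}G(q,W^{-1}a)$. The exponential has modulus one, so it suffices to apply \eqref{eq: Gauss} to $G(q,W^{-1}a)$; this is legitimate since $(a,q)=1$ and $W^{-1}$ is a unit modulo $q$, whence $(W^{-1}a,q)=1$. The bound $|S(q,a)|\lesssim_\epsilon q^{1/2+\epsilon}$ follows.

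In case (iii), when $(q,W)=2$ and hence $q>2$, Lemma \ref{lem: exp}(iii) writes $S(q,a)$ as $\frac{2}{H}$ times a sum of $H$ terms, each being a unit-modulus phase times $G(q/2,(2W)^{-1}a)$. To invoke \eqref{eq: Gauss} with modulus $q/2$, I need $((2W)^{-1}a, q/2)=1$; this holds because $(a,q/2)=1$ (since $q/2\mid q$ and $(a,q)=1$) and $(2W)^{-1}$ is a unit modulo $q/2$ by definition. Applying \eqref{eq: Gauss} then yields $|G(q/2,(2W)^{-1}a)|\lesssim_\epsilon (q/2)^{1/2+\epsilon}\lesssim q^{1/2+\epsilon}$, and estimating the outer sum trivially by the triangle inequality cancels the $2/H$ factor against the $H$ summands, giving the same bound.

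There is no serious obstacle here: every step is a direct application of Lemma \ref{lem: exp} and \eqref{eq: Gauss}, and the only point requiring mild care is the verification of the coprimality hypotheses needed for \eqref{eq: Gauss} in cases (ii) and (iii).
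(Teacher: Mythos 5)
Your proposal is correct and matches the paper's approach exactly: the paper simply states that the corollary follows by "combining Lemma \ref{lem: exp} and \eqref{eq: Gauss}", and your write-up fills in the intended case analysis and coprimality checks. One tiny wording nit: in case (iii), $(q,W)=2$ does not by itself imply $q>2$ (indeed $q=2$ also satisfies $(q,W)=2$); what you mean is that $q=2$ has already been dispatched by case (iv), so you may assume $q>2$ there, and the rest of the argument is sound.
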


\begin{proposition}[Major-arc estimate]\label{prop: upper-1}  Let $\alpha \in \mathfrak{M}(q,a)$ with $q\leq Q$ and $a\in [q]^*$. For any $\epsilon>0$ we have 
$$\hat \nu (\alpha)=\begin{cases}\int _0^N e((\alpha-1) u)du+O_A(N(\log N)^{-2A}),&q=1,\\  O_{A,\epsilon}(Nw^{-{1/2+\epsilon }}), & q\neq 1,
	
\end{cases}
$$ 
and 
$$|\hat \nu (\alpha)|\lesssim_{A,\epsilon } Nq^{-{1/2}+\epsilon}(1+N|\alpha -a/q|)^{-1} .$$ 

\end{proposition}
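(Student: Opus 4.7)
The plan is to read off both estimates directly from the asymptotic formula of Proposition \ref{prop: type-1-aym} by substituting the bound on $S(q,a)$ from Corollary \ref{cor: exp} and estimating the prefactor $\phi(W)/\phi(qW)$. Writing $I(\beta)=\int_0^N e(\beta u)\,du$ and using the elementary estimate $|I(\beta)|\lesssim N/(1+N|\beta|)$, Proposition \ref{prop: type-1-aym} reduces matters to controlling $\phi(W)|S(q,a)|/\phi(qW)$ uniformly in $q\leq Q$.

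Corollary \ref{cor: exp} already forces $S(q,a)=0$ unless $(q,W)\in\{1,2\}$ and $q\neq 2$, so only these two cases need treatment. In both, writing $q=q'$ or $q=2q'$ with $q'$ odd and coprime to $W$, a short computation using $8\mid W$ gives $\phi(W)/\phi(qW)\leq 1/\phi(q)$. Combining this with the Gauss-sum bound $|S(q,a)|\lesssim_\epsilon q^{1/2+\epsilon}$ and the standard estimate $\phi(q)\gtrsim_\epsilon q^{1-\epsilon}$ yields
\[
\frac{\phi(W)}{\phi(qW)}|S(q,a)|\lesssim_\epsilon q^{-1/2+\epsilon}.
\]
Inserting this into Proposition \ref{prop: type-1-aym} and checking that the error $N(\log N)^{-2A}$ is absorbed (which holds because $q\leq Q=(\log N)^{A}$ keeps $Nq^{-1/2+\epsilon}(1+N|\alpha-a/q|)^{-1}$ of size at least $N(\log N)^{-3A/2+A\epsilon}$) proves the second display.

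For the first display, the case $q=1$ is immediate: direct inspection of \eqref{eq: def-S} gives $S(1,1)=1$, since $h^2\equiv b\pmod W$ makes the outer phase trivial and the inner sum over $l\in[1]$ collapses to a single $1$; together with $\phi(W)/\phi(W)=1$, the main term of Proposition \ref{prop: type-1-aym} is exactly $I(\alpha-1)$. For $q\neq 1$ I would use the structure of $W=8\prod_{2<p<w}p$: every prime factor of $q$ coprime to $W$ must avoid $\{2\}\cup\{p:2<p<w\}$ and hence be at least $w$, so in the surviving cases ($(q,W)\in\{1,2\}$ with $q\neq 1,2$) one has $q\geq w$. The bound above then specializes to $|\hat\nu(\alpha)|\lesssim_{A,\epsilon}Nw^{-1/2+\epsilon}$, again absorbing the error term since $(\log N)^{-2A}$ decays much faster than $w^{-1/2+\epsilon}=(\log\log\log N)^{-1/2+\epsilon}$.

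No step is technically difficult; the main care lies in the arithmetic bookkeeping of $\phi(W)/\phi(qW)$ in the case $(q,W)=2$ and in confirming that every surviving $q\neq 1$ really does admit a prime factor of size at least $w$, so that the gain of $w^{-1/2+\epsilon}$ is available.
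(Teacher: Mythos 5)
Your proposal is correct and follows essentially the same route as the paper: invoke Proposition \ref{prop: type-1-aym}, bound $\tfrac{\phi(W)}{\phi(qW)}S(q,a)$ via Corollary \ref{cor: exp} and the identity $\phi(W)/\phi(qW)\leq 1/\phi(q)$ in the surviving cases $(q,W)\in\{1,2\}$, $q\neq 2$, then use $\int_0^N e(\beta u)\,du\lesssim N/(1+N|\beta|)$ and absorb the $N(\log N)^{-2A}$ error. The paper packages the $q\neq 1$ analysis slightly differently — it records $S(q,a)=0$ for $q\in(1,w)$ and $O_\epsilon(q^{-1/2+\epsilon})$ otherwise, whereas you observe directly that any surviving $q\neq 1$ has all prime factors of its $W$-coprime part $\geq w$ and hence $q\geq w$ — but these are the same observation, and the arithmetic bookkeeping, the verification that $S(1,1)=1$, and the error-absorption checks you carry out all match the paper's proof.
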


\begin{proof}By  Proposition \ref{prop: type-1-aym}, we have
$$\hat \nu(\alpha)=\frac{\phi(W)}{\phi(qW)}S(q,a)\int_0^Ne \bb{ \bb{\alpha-a/q}u}du+O_A\bb{N(\log N)^{-2A}}.$$
By Corollary \ref{cor: exp}, we have for any $\epsilon >0$ that
$$\frac{\phi(W)}{\phi(qW)}S(q,a) =\begin{cases}
 	1,&q=1,\\0, &q\in (1,w),
 	\\ O_{\epsilon}(q^{-1/2+\epsilon}), &otherwise. 
 \end{cases}
$$
From this we derive the desired asymptotic formula for $q=1$.  While  $q\neq 1$, this leads to
$$|\hat \nu(\alpha)|\lesssim_{\epsilon} w^{-1/2+\epsilon}N+N(\log N)^{-2A}\lesssim_{\epsilon,A} w^{-1/2+\epsilon} N ,$$
since $w=O(\log \log \log N)$.  As for the second claim, notice that 
\begin{equation*}
\begin{split}
	\int _0^N e((\alpha-a/q) u)du&=\sum_{n=1}^Ne(n(\alpha-a/q))+O(|\alpha-a/q|N)
	\\&\lesssim \min\{N,|\alpha-a/q|^{-1}\}+O((\log N)^A)
	\\ &\lesssim N(1+N|\alpha-a/q|)^{-1},
\end{split}
\end{equation*}
and 
$$ N(\log N)^{-2A}\leq  N(\log N)^{-(1/2+\epsilon)A -A}\lesssim Nq^{-1/2+\epsilon}(1+N|\alpha-q/a|)^{-1}.$$

\end{proof}

 The estimate on minor arcs is quite classical and we can start from the following result of Hua.

 \begin{lemma}\label{lem: hua-expo} Suppose $0< W\leq (\log X)^{\sigma_1}$ and $(\log X)^{\sigma}\leq q \leq X^2(\log X)^{-\sigma}$. Given any  $\sigma_0\geq 1$ and $(h,q)=1$, we have
 $$ \sum_{\substack{p\leq X\\ p\equiv t(\mod W) }} e\bb{\frac{hp^2}{q}}\lesssim \frac{X}{W(\log X )^{\sigma_0}}$$
 holds for  $\sigma\gtrsim \sigma_0+\sigma_1$.
 \end{lemma}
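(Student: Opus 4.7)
The plan is to estimate $T := \sum_{n \leq X,\, n \equiv t(\mod W)} \Lambda(n)\, e(hn^2/q)$, from which the claimed bound for the sum over primes follows at once by absorbing the $O(\sqrt{X}\log X)$ contribution of prime powers. The strategy is the classical combination of Vaughan's identity with Weyl-type estimates for quadratic exponential sums, adapted to the arithmetic progression modulo $W$.

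First I would apply Vaughan's identity with parameters $U = V$ chosen as a suitable power of $\log X$, decomposing $T$, up to negligible error, into Type I and Type II bilinear sums. The Type I sum has the schematic form $\sum_{m \leq M} \alpha_m \sum_{k,\, mk \equiv t(\mod W)} e(h m^2 k^2/q)$ with $|\alpha_m| \leq \log m$ and $M$ a small power of $\log X$. For each fixed $m$, after using $(m,W)$ to reduce the inner constraint to a single progression modulo $W/(m,W)$, the inner sum is an incomplete quadratic Weyl sum $\sum_{k \leq Y} e(\alpha k^2)$ with $\alpha$ close to $hm^2/q$. The hypothesis $(\log X)^\sigma \leq q \leq X^2(\log X)^{-\sigma}$ places the denominator of a convergent of $\alpha$ in the minor-arc range for quadratic Weyl, giving cancellation of the form $Y(\log X)^{-\sigma_0-1}$, together with a further factor $W^{-1}$ from the AP restriction; summing over $m$ then produces the desired bound.

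For the Type II sum $\sum_m \sum_k \alpha_m \beta_k\, 1_{mk \equiv t(\mod W)}\, e(hm^2k^2/q)$ I would apply Cauchy--Schwarz in one variable, expand the square, and swap the order of summation. This reduces matters to controlling $\sum_{k_1,k_2} \sum_m 1_{mk_1 \equiv t(\mod W)} 1_{mk_2 \equiv t(\mod W)}\, e(h m^2 (k_1^2 - k_2^2)/q)$. The diagonal $k_1 = k_2$ gives the main contribution after trivial estimation, already of size $O((X/W)(\log X)^{-\sigma_0-1})$; the off-diagonal terms supply additional $q^{-1/2}$-type cancellation through the quadratic Weyl estimate applied in $m$, and their total is controlled by the divisor bound and the fact that the AP conditions on $mk_1$ and $mk_2$ cut the $m$-sum by a factor $W/(k_1-k_2,W)$.

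The main obstacle is tracking the saving of a full power of $W$ rather than merely $\phi(W)$ throughout the Vaughan decomposition. This succeeds because the condition $n \equiv t(\mod W)$ passes through the bilinear decomposition as a congruence on the product $mk$, which after solving genuinely shortens the effective length of summation by a factor of $W$ in both Type I and Type II pieces. The required balance $\sigma \gtrsim \sigma_0 + \sigma_1$ then arises from the bookkeeping: the Weyl estimates supply $(\log X)^{-\sigma_0-C}$ provided $q^{1/2} \gtrsim W (\log X)^{\sigma_0}$, and combining this with $W \leq (\log X)^{\sigma_1}$ and $q \geq (\log X)^\sigma$ forces exactly the stated relation between the parameters.
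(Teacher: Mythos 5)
The paper does not actually prove this lemma: its entire proof is the one-line citation ``See Theorem~10 of Hua, \emph{Additive Theory of Prime Numbers}.'' So you were, in effect, asked to reconstruct the proof of that cited theorem, and the route you propose is the right one: a Vinogradov-type bilinear decomposition of $\Lambda$ (Vaughan's identity being the streamlined modern form), Weyl's inequality for quadratic exponential sums in the Type~I and off-diagonal Type~II pieces, and a dyadic/divisor bookkeeping to obtain a log-power saving uniformly for $q$ in the stated middle range. Your observation that one only needs a $\log$-power saving, so that the Vaughan parameters $U=V$ may be taken as powers of $\log X$ rather than powers of $X$, is a legitimate simplification, and your identification of where the constraint $\sigma\gtrsim\sigma_0+\sigma_1$ comes from is correct.

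Where the sketch is too loose is the Type~II off-diagonal. After Cauchy--Schwarz in $m$, the two congruences $mk_1\equiv t$ and $mk_2\equiv t\pmod W$ with $(t,W)=1$ force $(m,W)=(k_1,W)=(k_2,W)=1$ and hence $k_1\equiv k_2\pmod W$; for any other pair the $m$-sum is empty, and for a surviving pair the $m$-sum is confined to a single residue class modulo $W$. Your stated reduction factor $W/(k_1-k_2,W)$ is exactly backwards in the one case that matters: when $W\mid(k_1-k_2)$ it predicts no saving, whereas the $m$-range is in fact cut by the full factor $W$; and it suggests partial savings for $W\nmid(k_1-k_2)$, when in fact those pairs contribute nothing. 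This $k_1\equiv k_2\pmod W$ restriction on the off-diagonal pairs, not a varying gcd, is what delivers the full power of $W$ and must be imposed before the Weyl estimate is applied. The remaining stratification by $g=(k_1^2-k_2^2,q)$, counting pairs at each level with the divisor bound, applying Weyl where the effective modulus $q/g$ (further rescaled by the factor $W^2$ arising from parametrizing $m=Wj+m_0$) is large, and checking that the few pairs with $g$ comparable to $q$ are harmless, is where the real labour lies and is essentially what Hua's Theorem~10 carries out; your sketch asserts rather than establishes this. None of this is a wrong turn, but the phrase ``their total is controlled by the divisor bound'' hides the step on which the lemma actually rests.
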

 \begin{proof}
 See 	\cite[Theorem 10]{Hua2}.
 \end{proof}

 \begin{proposition}[Minor-arc estimate]  \label{prop: Type-2} Let $\alpha \in \mathfrak{m}$. Then  $$\hat\nu(\alpha )\lesssim \frac{N}{(\log N)^B}$$
 holds for $A\gtrsim B$.
   
 \end{proposition}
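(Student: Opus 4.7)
The plan is to treat each summand in the decomposition \eqref{eq: decomp-nu} by a classical Weyl-type argument and then appeal to Lemma \ref{lem: hua-expo}. After extracting the harmless phase $e(-b\alpha/W)$, it suffices to bound, for each $h\in H(b)$, the weighted exponential sum
$$W_h(X)=\sum_{\substack{p\leq X\\ p\equiv h\,(\mod W)}}(2p\log p)\,e(\alpha p^2/W),\qquad X=\sqrt{WN+b},$$
by $O(N/(\log N)^{\sigma_0-1})$ for $\sigma_0$ as large as we please.

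First I would strip the smooth weight $2p\log p$ by Abel summation, reducing to uniform bounds on $S_h(u)=\sum_{p\leq u,\,p\equiv h\,(\mod W)}e(\gamma p^2)$ with $\gamma=\alpha/W$ and $u\leq X$. To invoke Lemma \ref{lem: hua-expo} I need a rational approximation of $\gamma$ whose reduced denominator lies in the Hua window $[(\log u)^\sigma,u^2(\log u)^{-\sigma}]$. Dirichlet applied to $\alpha$ supplies $(a,q)=1$ with $q\leq N/Q$ and $|\alpha-a/q|\leq Q/(qN)$; since $\alpha\in\mathfrak{m}$ the approximation fails to lie in a major arc, forcing $q>Q=(\log N)^A$. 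Passing to $\gamma$ and cancelling the common factor $d=(a,W)$ (allowed because $(a,q)=1$) gives $\gamma=a'/q'+\beta/W$ with $(a',q')=1$ and $q<q'\leq qW$. Because $w=O(\log\log\log N)$ forces $W\leq(\log\log N)^{O(1)}$, we have $\log X\asymp\log N$ and the denominator $q'$ comfortably lands in the Hua window as soon as $A$ is large relative to $\sigma$.

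Second, I would absorb the continuous piece $\beta/W$ by writing $e(\gamma p^2)=e(a'p^2/q')\,e(\beta p^2/W)$ and performing a second Abel summation against $e(\beta u^2/W)$, whose derivative in $u$ is $O(|\beta|u/W)=O(u/NW)$ thanks to $|\beta|\leq Q/(qN)\leq 1/N$. The purely rational sum $T_h(u)=\sum_{p\leq u,\,p\equiv h(\mod W)}e(a'p^2/q')$ is controlled directly by Lemma \ref{lem: hua-expo}, giving $T_h(u)\lesssim u/(W(\log u)^{\sigma_0})$; the continuous correction term is dominated by the boundary, so $S_h(u)\lesssim u/(W(\log u)^{\sigma_0})$. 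Substituting this into the outer Abel summation yields $W_h(X)\lesssim X^2/(W(\log X)^{\sigma_0-1})\lesssim N/(\log N)^{\sigma_0-1}$, and the prefactor $\phi(W)/(WH)$ combined with summation over $h\in H(b)$ costs only $\phi(W)/W\leq 1$. Choosing $\sigma_0=B+1$ and $A$ correspondingly large closes the bound.

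The main obstacle I anticipate is reconciling the two notions of denominator at play: the minor-arc hypothesis produces a large denominator for $\alpha$ itself, whereas Lemma \ref{lem: hua-expo} needs a large reduced denominator for $\alpha/W$. Ensuring that $q'=qW/(a,W)$ falls in the Hua window is precisely what compels the slow-growth constraint $w=O(\log\log\log N)$; a faster-growing $W$ would push $q'$ past the upper edge $X^2(\log X)^{-\sigma}$ and destroy the estimate.
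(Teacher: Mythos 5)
Your proposal follows essentially the same route as the paper: decompose $\hat\nu$ via \eqref{eq: decomp-nu}, reduce to the weighted prime-square exponential sum, pass from $\alpha$ to $\alpha/W$ by cancelling $(a,W)$ from the denominator, and invoke Lemma \ref{lem: hua-expo} once the reduced denominator lies in Hua's window. The paper does exactly this, writing $a_1=a/(a,W)$, $W_1=W/(a,W)$ and applying the lemma with denominator $qW_1$. You are also correct that the slow growth $w=O(\log\log\log N)$, equivalently $W=(\log N)^{o(1)}$, is precisely what keeps $qW_1$ below $X^2(\log X)^{-\sigma}$, and your observation that the $1/W$ in Hua's bound cancels against the prefactor $\phi(W)/(WH)$ summed over the $H$ residues $h\in H(b)$ is the same bookkeeping the paper uses.

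One imprecision worth flagging: you claim $T_h(u)\lesssim u/(W(\log u)^{\sigma_0})$ uniformly for $u\leq X$ and then feed this into the Abel summation. But Lemma \ref{lem: hua-expo} requires the denominator $q'$ to satisfy $q'\leq u^2(\log u)^{-\sigma}$ and $W\leq(\log u)^{\sigma_1}$; since $q'\geq q\geq(\log N)^A$, both constraints fail once $u$ drops below roughly $\sqrt{q'}$, i.e., below a fixed power of $\log N$. The paper sidesteps this by applying Hua's bound only for $X\in[\sqrt{N}(\log N)^{-A/4},\sqrt{N}\log N]$ and absorbing the short range $p\leq\sqrt{N}(\log N)^{-A/4}$ into the separate error term $N(\log N)^{-A/8}$ produced by the integration by parts. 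Your argument needs the analogous split: use the trivial bound $|T_h(t)|\leq t$ on $[2,T_0]$ with $T_0=\sqrt{N}(\log N)^{-A/4}$, which contributes $O(T_0^2\log T_0)=O(N(\log N)^{1-A/2})$ to the outer Abel sum and is negligible for $A$ large. With that correction, your argument is complete and coincides with the paper's.
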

\begin{proof} By \eqref{eq: decomp-nu},  it suffices to show this for 
$$\sum_{\substack{p\leq \sqrt{WN+b}\\ p\equiv h(\mod W)}} (2p\log p)e\bb{\frac{p^2\alpha}{W}}.$$By Dirichlet approximation theorem, there exists $q\leq N/Q$ and $(a,q)=1$ such that $|\alpha -a/q|\leq Q/qN$. Recall that we have $N> 2Q^2$, thus if $q\leq Q$, $a$ must belong to $[q]^*$ and hence $\alpha \in \mathfrak{M}$.  Since $\alpha \not \in \mathfrak{M}$, this means  $q\in [Q,N/Q]=[(\log N)^{A},N(\log N)^{-A}]$ and $|\alpha-a/q|\leq 1/N$.

  	Notice that when    $X\in [\sqrt{N}(\log N)^{-A/4}, \sqrt{N}\log N]$ and $N\gtrsim _A 1$, we have  $$q\in[(\log N)^A,N(\log N)^{-A}]\subset [(\log X)^{A/4}, X^2(\log X)^{-A/4}].$$
Since $W=(\log N)^{o_N(1)}$, by Lemma \ref{lem: hua-expo},  we let $a_1=a/(a,W)$ and $W_1=W/(a,W)$ to obtain for $A\gtrsim B$ and $X\in [\sqrt{N}(\log N)^{-A/4}, \sqrt{N}\log N]$ that 
$$ \sum_{\substack{p\leq X\\ p\equiv h(\mod W)}} e\bb{\frac{ap^2}{qW}}= \sum_{\substack{p\leq P\\ p\equiv h(\mod W)}} e\bb{\frac{a_1p^2}{qW_1}}\lesssim \frac{X}{ (\log N)^{10B}}.$$
Integrating by parts and using the  fact  $|\alpha-a/q|\leq 1/N$,  this gives for $A\gtrsim B $ and $X\leq \sqrt{N}\log N$ that 
$$ \sum_{\substack{p\leq P\\ p\equiv h(\mod W)}} (2p\log p)e\bb{\frac{p^2\alpha}{W}}\lesssim \frac{X^2}{(\log N)^{5B}}+\frac{N}{(\log N)^{A/8}}.$$
Again by $ W=(\log N)^{o_N(1)}$,  we see $\sqrt{WN+b}\leq  \sqrt{N} \log N$ holds for $N\gtrsim 1$. We can thus conclude the proof by choosing $X=\sqrt{WN+b}$ and $A\gtrsim B$.
\end{proof}

\begin{proof}[Proof of Proposition 3.2]
We use a similar method to estimate $\widehat {1_{[N]}}(\alpha)$. Let  $\alpha \in \mathfrak{M}(q,a)$ with $q\leq Q$ and $a\in [q]^*$. We have
$$
\widehat{1_{[N]}}(\alpha)= 1_{q=1} \int_{0}^N e((\alpha-1)u)du +O((\log N)^A)
 $$
 and for $\alpha \in \mathfrak{m}$  that
$$\widehat{1_{[N]}}(\alpha)=O\bb{N(\log N)^{-A}}.$$
Comparing  these two facts with Proposition \ref{prop: upper-1}  and Proposition \ref{prop: Type-2}, we conclude by $\epsilon>0$, $B=1$ and $A\gtrsim 1$ that 
$$\|\hat \nu -\widehat{1_{[N]}} \|_{L^\infty}\lesssim_{\epsilon}  w^{-1/2+\epsilon}N$$

\end{proof}

\section{Restriction estimate}
In this section we again let $\nu=\nu_b$ be as in \eqref{eq:nu-def} with  $b\in Z(W)$ fixed. Due to technical reasons we will not fix $f_b$ but instead adopt a flexible argument for  all functions bounded by $\nu$.  From the  work of Bourgain \cite{Bourgain}, we notice that   it will be convenient to argue on discrete sets.

For any function $f:[N]\td \C$, we define for $u>0$ that  \begin{equation}\label{eq :level}
	f_*(u) =|\{n\in [N]:|\hat f(n/N)|\geq u N\} |.
\end{equation}
 The following lemma, which has essentially appeared in \cite{Bourgain},  can be used to control the the size of $f_*(u)$, provided $u$ is not too small. 

  \begin{lemma} \label{lem: Bourgain} Let $ Q, T, M\geq 2$ and    $g$ be a function on $\R/\Z$  that obeys the bound
\begin{equation}\label{eq: condition}
	|g(\alpha)|\leq \max\left\{ \sum_{q\leq Q}\frac{1}{q} \sum_{a\in [q]}  \bb{M\|\alpha-a/q\|+1}^{-2}, \frac{1}{T} \right\}.
\end{equation}
Let $\{t_r\}_{r\in [R]}\subset \R/\Z $ be  such that $\|t_r-t_s\|\geq M^{-1}$  holds for $r\neq s$ and
\begin{equation}\label{eq: large-value}
	\sum_{r,s\in [R]} |g(t_r-t_s)| \geq \eta R^2
\end{equation}
holds for some $\eta\geq 2/T$.  For any $\epsilon>0$ we have   $$R\lesssim_{\epsilon} \eta^{-1}(\log Q)^{O_{\epsilon}(1)}T^{\epsilon}.$$
\end{lemma}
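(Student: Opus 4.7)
The plan is a large-sieve-type argument structured by a dyadic decomposition in $q$. First I absorb the ``$1/T$'' part of the bound on $g$: writing $A(\alpha):=\sum_{q\leq Q}\frac{1}{q}\sum_{a\in[q]}(M\|\alpha-a/q\|+1)^{-2}$, the hypothesis gives $|g(\alpha)|\leq A(\alpha)+1/T$, so the contribution of $1/T$ to $\sum_{r,s}|g(t_r-t_s)|$ is at most $R^2/T\leq \eta R^2/2$ by $\eta\geq 2/T$. It therefore suffices to prove
\[ \sum_{q\leq Q}\frac{1}{q}\sum_{a\in[q]}\sum_{r,s}\bb{M\|t_r-t_s-a/q\|+1}^{-2}\gtrsim\eta R^2. \]
A dyadic decomposition of $q$ into $O(\log Q)$ ranges, followed by pigeonhole, isolates a scale $Q_0\leq Q$ at which the block $q\asymp Q_0$ contributes at least $\eta R^2/\log Q$.

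For this block, I would expand via the Fej\'er kernel. With $(M\|\alpha\|+1)^{-2}\asymp F_M(\alpha)/M$ where $F_M(\alpha)=\sum_{|n|<M}(1-|n|/M)e(n\alpha)$, and with the orthogonality $\sum_{a\in[q]}e(-na/q)=q\cdot 1_{q\mid n}$, the block is comparable to
\[ \frac{1}{M}\sum_{|n|<M}\bb{1-\tfrac{|n|}{M}}|S(n)|^2\,\tau_{Q_0}(n),\quad S(n):=\sum_{r}e(nt_r),\ \tau_{Q_0}(n):=\#\{q\asymp Q_0:q\mid n\}. \]
Splitting $n=0$ from $n\neq 0$ and using $\tau_{Q_0}(n)\leq d(n)$ together with the dual large sieve bound $\sum_{|n|<M}(1-|n|/M)|S(n)|^2\lesssim RM$ (valid thanks to the $M^{-1}$-separation of $\{t_r\}$), one derives
\[ \eta R^2\;\lesssim\;(\log Q)\bb{\frac{Q_0 R^2}{M}+R\max_{0<|n|<M}\tau_{Q_0}(n)}. \]

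The hard part will be converting the pointwise divisor estimate $\tau_{Q_0}(n)\lesssim_\epsilon |n|^\epsilon\leq M^\epsilon$ into the sharper $T^\epsilon$ promised in the conclusion, together with absorbing the diagonal term $Q_0R^2/M$. I would address this by a bootstrap in the style of Bourgain \cite{Bourgain}: a first, crude application yields $R\lesssim_\epsilon \eta^{-1}M^\epsilon(\log Q)^{O(1)}$, which can be reinserted to restrict the effectively contributing frequencies $n$ to a range of size $\lesssim T$, after which the divisor bound improves to $\tau_{Q_0}(n)\lesssim T^\epsilon$. Alternatively, a Gallagher-type refinement of the Fej\'er step (as developed in \cite{Sal}) provides the same gain directly, and any remaining polylog losses collapse into the $(\log Q)^{O_\epsilon(1)}$ factor. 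The diagonal $Q_0 R^2/M$ is then handled by choosing $Q_0$ to balance against $\eta M/\log Q$ and iterating the argument if necessary.
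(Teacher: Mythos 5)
Your preliminary reductions --- absorbing the $1/T$ term via $\eta\geq 2/T$, and expanding through a Fej\'er kernel to reach a weighted sum $\frac{1}{M}\sum_{|n|<M}(1-|n|/M)|S(n)|^2\,d(n)$ with a divisor weight --- agree in spirit with the paper's opening (the paper works with $\sigma=\sum_r\psi(\cdot-t_r)$ and its convolution square, but the resulting inequality $\eta R^2M^{-1}\lesssim\sum_{|k|\leq 2M}|\hat\sigma(k)|^2\sum_{q\leq Q,\,q\mid k}1$ is essentially your display). The gap is exactly where you flag it: turning the divisor weight into a factor $T^\epsilon$ rather than $M^\epsilon$. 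The bootstrap you sketch does not close it. Reinserting a crude bound $R\lesssim\eta^{-1}M^\epsilon(\log Q)^{O(1)}$ does nothing to shrink the frequency range: the frequencies $|n|<M$ are fixed by the Fej\'er expansion, and the divisor count $\sum_{q\leq Q,\,q\mid n}1$ depends only on $n$, not on $T$ or $R$, so an improved bound on $R$ cannot restrict which $n$ contribute.

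The paper's actual mechanism is a level-set/moment argument on the divisor weight, not a pointwise divisor bound and not a dyadic decomposition in $q$. Writing $d_Q(k)=\#\{q\leq Q:q\mid k\}$, one keeps all $q\leq Q$ together and splits $\sum_{|k|\leq 2M}|\hat\sigma(k)|^2 d_Q(k)$ according to whether $d_Q(k)<Q_0$ or $d_Q(k)\geq Q_0$. On the small set, Plancherel together with the $M^{-1}$-separation hypothesis gives $\|\sigma\|_{L^2}^2\lesssim RM^{-1}$, hence a contribution $\lesssim Q_0RM^{-1}$. On the large set one uses $|\hat\sigma(k)|\leq\|\sigma\|_{L^1}\lesssim RM^{-1}$ plus a Chebyshev argument with the $r$-th moment estimate $\sum_{k\leq 2M}d_Q(k)^r\lesssim M(r\log Q)^{2^r}$, which yields a contribution $\lesssim R^2M^{-1}Q_0^{-(r-1)}(r\log Q)^{2^r}$. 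Combining gives $\eta R\lesssim Q_0+Q_0^{-(r-1)}(r\log Q)^{2^r}R$, and choosing $Q_0\asymp T^{1/(r-1)}(r\log Q)^{2^r/(r-1)}$ with $r\gtrsim\epsilon^{-1}$ produces the stated $T^\epsilon(\log Q)^{O_\epsilon(1)}$ factor. Your dyadic split in $q$ is unnecessary and does not by itself supply this moment-of-divisor-function input, which is where the whole gain actually comes from; without it the argument stalls at $M^\epsilon$.
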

\begin{proof} Let $$F(\alpha)=(M\|\alpha\|+1)^{-2},\quad G(\alpha)=\sum_{q\leq Q}\frac{1}{q} \sum_{a\in [q]}F(\alpha-a/q).$$
 By \eqref{eq: condition}, \eqref{eq: large-value} and  $\eta\geq 2/T$, we have
 \begin{equation}
 	\eta R^2\leq 2 \sum_{\substack{r,s\in [R]\\ |g(t_r-t_s)|\geq 1/T}} |g(t_r-t_s)|\leq 2\sum_{r,s\in [R]} G(t_r-t_s).
 \end{equation}
 Let $$ \psi(\alpha)=\bb{\frac{1}{2M+1}\sum_{|k|\leq M} e(k\alpha)}^2,\quad \sigma(\alpha)=\sum_{r\in [R]} \psi(\alpha-t_r).$$
 Notice that $\psi(\alpha)\lesssim \min\{1,(M\|\alpha\|)^{-2}\}$. It follows immediately $\|\psi\|_{L^1}\lesssim M^{-1}$ and thus $\|\sigma \|_{L^1} \lesssim RM^{-1}$. In particular, since $\|t_r-t_s\| \geq M^{-1}$ for $r\neq s$, we have $\|\sigma\|_{L^{\infty}}\lesssim 1$ and hence $\|\sigma\|_{L^{2}}^2\lesssim RM^{-1}$,  which provides the crucial saving. 

  Meanwhile, notice also that  $\psi(\alpha)\gtrsim  1$ for $\|\alpha\| \leq (10 0M)^{-1}$. We then have $\sigma(\alpha)\gtrsim 1$ if $\|\alpha-t_r\|\leq (100 M)^{-1}$ for some $r\in [R]$. Denote $\tilde {\sigma}(\alpha)=\sigma(-\alpha)$. Then  for $\|\alpha-(t_r-t_s)\|\leq (1000M)^{-1}$, we have
 $$\tilde \sigma*\sigma(\alpha)\gtrsim M^{-1}.$$  
 When $\|\alpha-\alpha'\|\leq (1000M)^{-1}$, it is clear $F(\alpha) \asymp F(\alpha')$  and hence $G(\alpha)\asymp G(\alpha')$. Thus 
 $$\eta R^2 M^{-2}\lesssim   \int_0^1 G(\alpha )(\tilde \sigma*\sigma)(\alpha)d\alpha=\sum_{|k|\leq 2N}\hat G(k) |\hat \sigma(k)|^2.$$
By the simple fact  $|\hat F(k)|\leq \| F\|_{L^1}\lesssim M^{-1}$, we have
 $$\hat G(k)=\sum_{q\leq Q}\frac{1}{q}\sum_{a\in [q]}e\bb{\frac{ak}{q}} \hat F(k)\leq \sum_{\substack{q\leq Q\\ q\mid k}}\hat F(k)\lesssim M^{-1}\sum_{\substack{q\leq Q\\ q\mid k}} 1 ,$$
 which means that 
 $$\eta R^2M^{-1}\lesssim  \sum_{|k|\leq 2M}  |\hat \sigma(k)|^2\sum_{\substack{q\leq Q\\ q\mid k}}1. $$
 We divide this sum into two parts, according to the size of inner sum. When the inner sum is smaller than $Q_0$, we use Plancherel's formula to obtain 
 $$ \sum_{|k|\leq 2N}  |\hat \sigma(k)|^2\sum_{\substack{q\leq Q\\ q\mid k}}1\leq Q_0\|\sigma\|_{L^2}^2\leq Q_0RM^{-1}. $$
 Otherwise, we use the fact $|\sigma(k)|\leq \|\sigma\|_{L^1}\lesssim RM^{-1}$ to obtain for any $r\geq 1$ that
 $$ \sum_{|k|\leq 2M}  |\hat \sigma(k)|^2\sum_{\substack{ q\leq Q \\ q\mid k}}1\lesssim  R^2M^{-2}Q_0^{-(r-1)} \sum_{k\leq 2M} \bb{ \sum_{ \substack{ q\leq Q\\ q\mid k}}1}^r\lesssim R^2M^{-1}Q_0^{-(r-1)}(r\log Q)^{2^r}, $$
 since
 \begin{equation*}
 	\begin{split}
 	 \sum_{k\leq 2M} \bb{ \sum_{ \substack{ q\leq Q\\ q\mid k}}1}^r &= 2M \sum_{q_1,q_2,..,q_r\leq Q} \frac{1}{[q_1,q_2\cdots ,q_r]}+O(Q^r)	\\ &\lesssim M \sum_{q\leq Q^r} \frac{(\sum_{d\mid q }1)^r}{q}+Q^r
 	 \\ &\lesssim  	M(r\log Q)^{2^r}.
 	\end{split}
 \end{equation*}
 We now arrive at
 $$ \eta R\lesssim  Q_0+ Q_0^{-(r-1)}(r\log Q)^{2^r}R. $$
 It then concludes the proof by  $Q_0=r(r\log Q)^{2^r/(r-1)}T^{1/(r-1)}$  and $r\gtrsim \epsilon^{-1} $.
\end{proof}
For those small values of $u$, our argument will  be based on the following simple fourth-moment estimate. 
\begin{lemma}\label{lem: critical value} Let $f$ be a function bounded by $\nu$. We have for $N\gtrsim 1$ that
$$\sum_{n\in [N]}|\hat f(n/N) |^4 \lesssim N^4 (\log N)^8.$$
\end{lemma}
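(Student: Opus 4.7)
The plan is to reduce the discrete $\ell^4$-sum to a continuous $L^4$-integral via orthogonality, replace $f$ by $\nu$ and pass to an exponential sum $G$ over prime squares, and bound $\int|G|^4$ using divisor estimates for representations as sums of two squares. By the orthogonality relation $\sum_{n\in[N]} e(mn/N)=N\cdot\mathbf{1}_{N\mid m}$, expanding the fourth power gives
$$\sum_{n\in[N]} |\hat f(n/N)|^4 = N\sum_{\substack{n_i\in[N]\\ n_1+n_2\equiv n_3+n_4\,(\mod N)}} \prod_i f(n_i).$$
Since $n_i\in[N]$, the congruence collapses to $n_1+n_2-n_3-n_4\in\{-N,0,N\}$, so the right side equals $N(E_0+E_1+E_{-1})$ with $E_k=\int_0^1 |\hat f(\alpha)|^4 e(-kN\alpha)\,d\alpha$. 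Since $|E_{\pm 1}|\leq E_0$, I obtain $\sum_{n\in [N]} |\hat f(n/N)|^4 \leq 3N\int_0^1 |\hat f(\alpha)|^4\,d\alpha$.

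Using $0\leq f\leq \nu$ and $\nu(n)=\frac{\phi(W)}{WH}(2p\log p)$ on its support $\{n:Wn+b=p^2\}$, the substitution $p_i^2=Wn_i+b$ converts $n_1+n_2=n_3+n_4$ into $p_1^2+p_2^2=p_3^2+p_4^2$ (the $b$'s cancel) and gives
$$\int_0^1 |\hat f|^4\,d\alpha \leq \bb{\frac{\phi(W)}{WH}}^4 \int_0^1 |G(\beta)|^4\,d\beta,\quad G(\beta)=\sum_{\substack{p\leq X\\ p^2\equiv b\,(\mod W)}}(2p\log p)e(p^2\beta),$$
with $X=\sqrt{WN+b}$. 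The heart of the proof is bounding $\int_0^1|G|^4\,d\beta$. By Parseval it equals $\sum_s R(s)^2$, where $R(s)$ is the weighted count for $p_1^2+p_2^2=s$ subject to the congruence. Using $p_1p_2\leq s/2$ and $\log p_i\leq\log X$ gives $R(s)\leq 2s r_2(s)(\log X)^2$, where $r_2(s)$ counts ordered representations of $s$ as a sum of two positive integer squares. The classical bounds $r_2\leq 4\tau$ and $\sum_{s\leq Y}\tau(s)^2\lesssim Y(\log Y)^3$ then yield
$$\int_0^1|G|^4\,d\beta \lesssim (\log X)^4 \sum_{s\leq 2X^2} s^2 r_2(s)^2 \lesssim X^6(\log X)^7.$$

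Assembling everything with $X^2\leq WN+b$, $\phi(W)/(WH)\leq 1$, and $W=8\prod_{2<p<w}p\leq(\log\log N)^{O(1)}$ (from $w=O(\log\log\log N)$ and the prime number theorem), I obtain $\sum_{n\in[N]}|\hat f(n/N)|^4 \lesssim N X^6(\log N)^7 \lesssim W^3 N^4(\log N)^7 \leq N^4(\log N)^8$ for $N\gtrsim 1$. The main obstacle is the $\int|G|^4$ estimate: integers $s$ with anomalously many sum-of-two-squares representations must be controlled, and this is exactly what $r_2\leq 4\tau$ together with the divisor second moment $\sum_{s\leq Y}\tau(s)^2\lesssim Y(\log Y)^3$ accomplish.
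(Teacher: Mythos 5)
Your proof is correct, and it is close in spirit to the paper's but chooses a complementary combinatorial decomposition. The paper stays discrete and groups the off-diagonal count by fixed \emph{differences} $k=m_1-m_2=m_3-m_4$, observing that $\nu(m)\nu(n)\neq 0$ with $m-n=k$ forces $x^2-y^2=Wk$ and then invoking the factorization $(x-y)(x+y)=Wk$ to get a divisor bound in $k$. You instead pass explicitly to the $L^4$ integral (your treatment of the wrap-around terms $n_1+n_2-n_3-n_4\in\{-N,0,N\}$ is in fact slightly more careful than the paper's displayed identity) and group by fixed \emph{sums} $s=p_1^2+p_2^2$, controlling $R(s)$ via the pointwise estimate $4p_1p_2\log p_1\log p_2\le 2s(\log X)^2$ and the classical bound $r_2(s)\lesssim\tau(s)$ for representations as a sum of two squares. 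Both routes terminate at the divisor second moment $\sum_{m\le Y}\tau(m)^2\lesssim Y(\log Y)^3$, and both absorb a residual power of $W=(\log N)^{o(1)}$ into the final exponent $(\log N)^8$, so the two arguments are of exactly the same strength; the difference is whether one leans on the difference-of-squares factorization or on sum-of-two-squares representation counts. Your bookkeeping of constants (the factor $3$ from $E_0+E_{\pm1}$, $\phi(W)/(WH)\le 1$, $X^6\lesssim W^3N^3$, $W^3\le\log N$ for large $N$) all checks out.
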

\begin{proof}
Note  first that 
\begin{equation*}
\begin{split}
	\sum_{n\in [N]}|\hat f(n/N)|^4&=\sum_{n\in [N]}\sum_{m_1,m_2,m_3,m_4\in [N]}{f(m_1)}\overline {f(m_2)} \overline {f(m_3)} f(m_4) e\bb{\frac{(m_1-m_2-m_3+m_4)n}{N}}\\& = N\sum_{k}\left |\sum_{m-n=k} f(m)\overline {f(n)} \right |^2\\ & \leq N \bb{\frac{\phi(W)}{W}\sqrt{WN+b}\log (WN+b)}^4  \sum_{k} \left |\sum_{\substack{m-n=k\\ \nu(m)\nu(n)\neq 0}  }1\right|^2.\end{split}
\end{equation*}
The condition $\nu(m)\nu(n)\neq 0$ means that $ Wm+b$, $Wn+b\in \mathbb{P}^2$ and $m,n\in [N]$. Thus, for $k=0$, the inner sum contributes $O(N)$, and for $1\leq |k|< N$ we instead have
$$ \sum_{\substack{m-n=k\\ \nu(m)\nu(n)\neq 0}} 1\leq  \sum_{\substack{x^2-y^2=Wk\\ W\mid (x-y)  }} 1\leq \sum_{d\mid |k|} 1.$$
This leads to 
$$  \sum_{k} \left |\sum_{\substack{m-n=k\\ \nu(m)\nu(n)\neq 0}  }1\right|^2 \lesssim N+\sum_{1\leq k< N} \bb{\sum_{d\mid k}1}^2\lesssim N (\log N)^{3}.$$
By the fact $W=(\log N)^{o_N(1)}$, we obtain for $N\gtrsim 1$,
$$\sum_{n\in [N]}|\hat f(n/N)|^4\lesssim N^4(\log N)^{8}$$
as desired.

\end{proof}

\begin{proof}[Proof of Proposition 3.3]  Let $q>4$ and $f:[N]\td \C$ be bounded by $\nu$.  Let $f_*(u)$ be given by \eqref{eq :level}. We first find by Lemma \ref{lem: critical value}, 
$$ (uN)^{4}f_*(u)\lesssim \sum_{n\in [N]} |\hat f(n/N)|^4\leq  N^4(\log N)^{8},$$
 It then follows 
	\begin{equation}\label{eq: lower-frequency}
	f_*(u)\lesssim u^{-4}(\log N)^{8} 
	\end{equation}
On the other hand, we let $\{t_r\}_{r\in [R]}=\{n/N: |f(n/N)|\geq uN,  n\in [N]\}$  and  $\{c_r\}_{r\in [R]}\subset \C$ be such that $c_r\hat f(t_r)=|\hat f(t_r)|$, $r\in [R]$. Then,
$$ uRN\leq \sum_{r\in [R]}|\hat f(t_r)|=\sum_{r\in [R]}c_r\hat f(t_r)=\sum_{n\in [N]}f(n) \sum_{r} c_re(nt_r).$$
By Cauchy-Schwarz inequality and  $|f|\leq \nu$, this yields
$$u^2R^2N^2\leq \bb{\sum_{n\in N}|f(n)|}\bb{\sum_{n\in [N]} |f(n)|\sum_{r,s\in [R]}e(n(t_r-t_s))}\lesssim N \sum_{r,s\in [R]} \hat \nu(t_r-t_s).$$
Next, by H\"older's inequality, we have for any $q'>4$ that 
$$ u^{q'} R^2\leq \sum_{r,s} \bb{N^{-1}\hat \nu(t_r-t_s)}^{q'/2}.$$
Observe that by Proposition \ref{prop: type-1-aym} and  Proposition \ref{prop: Type-2},  for $A\gtrsim B\geq 1$, there is some constant $C_{q',A,B}$ such that  $ C_{q',A,B}(N^{-1}\hat \nu)^{q'/2} $ satisfy	\eqref{eq: condition} with $Q=(\log N)^A$, $T=(\log N)^B$ and $M=N$. By Lemma \ref{lem: Bourgain}, we have for any $\epsilon>0$ and $u\geq 2(\log N)^{-B}$ that 
$$ R\lesssim_{\epsilon,{q'},A,B}  u^{-q'} (\log N)^{(B+1)\epsilon},  $$
which leads to the  bound
\begin{equation}\label{eq: higher-frequency}
	 f_*(u) \lesssim_{\epsilon,{q'},A,B}  u^{-q'} (\log N)^{(B+1)\epsilon}\lesssim_{\epsilon,q',A,B} u^{-q'-2\epsilon}.
\end{equation}
When $u\leq 2(\log N)^{-B}$, we use \eqref{eq: lower-frequency} to obtain 
$$  f_*(u)\lesssim_{A,B}u^{-4-8B^{-1}}.$$
Since  $q'+2\epsilon$ and $4+8B^{-1}$ can be arbitrarily close to $4$, we conclude that   $f_*(u)\lesssim_q u^{-(q+4)/2} $  holds for $u>0$.
Note that  $\|\hat f\|_{L^{\infty}}< 2N$ as $N\gtrsim 1$. We have now 

\begin{equation*}
	\begin{split}
		\sum_{n\in [N]}|\hat f(n/N)|^q &\lesssim (2N)^q \sum_{k\in \Z}2^{kq}|\{n\in [N]: 2^{k}N\leq |\hat f(n/N)|\leq 2^{k+1}\}|
		\\&\lesssim_q N^{q}\sum_{k\leq 0} 2^{kq}f_*(2^k)\\&\lesssim_q N^q \sum_{k\leq 0} 2^{k(q-4)/2}\\&\lesssim _q N^q.
			\end{split}
\end{equation*}
For any $\theta\in [0,N^{-1}]$, we now specialize $f(n)=f_b(n)e(n\theta)$ and thus 
$$\hat f (\alpha)= \sum_{n\in [N]}f_b(n)e(n(\alpha+\theta))=\hat f_b(\alpha+\theta).$$
Since $|f|\leq \nu$, the above argument gives
$$\sum_{n\in [N]} |\hat f_b(n/N+\theta)|^4\lesssim_q N^q.$$
Integrating this over $\theta \in [0,N^{-1}]$ we conclude the proof.

\end{proof}

\end{document}